\newtheorem{prop}{Proposition}
\newtheorem{theo}[prop]{Theorem}
\newtheorem{theorem}{Theorem}
\newtheorem{corollary}{Corollary}[section]
\newtheorem{lemma}[corollary]{Lemma}
\newtheorem{proposition}[corollary]{Proposition}
\newcommand{\cal}{\mathcal}
\newcommand{\Prob}{{\mathbb P}}
\newcommand{\Z}{{\mathbb Z}}
\newcommand{\E}{{\mathbb E}}
\newcommand{\R}{{\mathbb{R}}}
\newcommand{\dist}{\operatorname{dist}}
\renewcommand{\Im}{\operatorname{Im}}
\renewcommand{\Re}{\operatorname{Re}}
\newcommand{\p}{\partial}
\newcommand{\Half}{{\mathbb H}}
\newcommand{\Disk}{{\mathbb D}}
\newcommand{\ball}{{\mathcal B}}
\newcommand{\F}{{\cal F}}
\newcommand{\domain}{{\cal D}}
\begin{document}
\begin{frontmatter}

\title{\textit{SLE} curves and natural parametrization}
\runtitle{Natural parametrization}

\begin{aug}
\author[A]{\fnms{Gregory F.} \snm{Lawler}\thanksref{t1}\ead[label=e1]{lawler@math.uchicago.edu}}
\and
\author[B]{\fnms{Wang} \snm{Zhou}\corref{}\thanksref{t2}\ead[label=e2]{stazw@nus.edu.sg}}
\runauthor{G. F. Lawler and W. Zhou}
\affiliation{University of Chicago and National University of Singapore}
\address[A]{Department of Mathematics\\
University of Chicago\\
5734 S. University Avenue\\
Chicago, Illinois 60637\\
USA\\
\printead{e1}} 
\address[B]{Department of Statistics\\
\quad and Applied Probability\\
National University of Singapore\\
Singapore 117546\\
Singapore\\
\printead{e2}}
\end{aug}

\thankstext{t1}{Supported by NSF Grant DMS-09-07143.}

\thankstext{t2}{Supported in part by Grant R-155-000-095-112 at
the National University of Singapore.}

\received{\smonth{2} \syear{2011}}
\revised{\smonth{12} \syear{2011}}

%
\begin{abstract}
Developing the theory of two-sided radial and chordal $\mathit{SLE}$,
we prove
that the natural parametrization on $\mathit{SLE}_\kappa$ curves is well
defined for all $\kappa< 8$. Our proof uses a two-interior-point
local martingale.
\end{abstract}

%
\begin{keyword}[class=AMS]
\kwd{60D05}
\kwd{60J60}
\kwd{30C20}
\kwd{28A80}.
\end{keyword}
\begin{keyword}
\kwd{\textit{SLE}}
\kwd{natural parametrization}
\kwd{Doob--Meyer decomposition}
\kwd{local martingale}.
\end{keyword}

\end{frontmatter}

\section{Introduction}
\subsection{Background and motivation}

Suppose that $x_j, j=1,2,\ldots\,$, are independent and
identically distributed random vectors in $\Z^2$ with probabilities
\[
\Prob\{x_j=\mathrm{e}\}=1/4,\qquad |\mathrm{e}|=1.
\]
It is well known that the scaled simple random walk in $\R^2$,
\[
B_t^{(n)} = n^{-1/2}\sum_{j=1}^{[nt]}x_j,\qquad 0\leq t \leq1,
\]
converges to a standard
two-dimensional Brownian motion $B_t$, $0\leq t\leq1$, as \mbox{$n\to\infty$}.
In the scaled walk,
each step is traversed in the same amount of time.
When passing to the scaling limit, this parameter $t$ becomes
the natural parametrization of Brownian motion. We can write the
scaling factor as $n^{-1/d}$, where
$d = 2$ is the fractal dimension of the Brownian paths. The natural
parametrization
is a $d$-dimensional measure.

One variant of simple random walk is the loop erased random walk first
appeared in~\cite{lawlerwalk}. Its definition is as follows. Consider
any finite or recurrent connected graph $G$, one vertex $a$ and a set of
vertices $V$. Loop-erased random walk (LERW) from $a$ to $V$ is a
random simple
curve joining $a$ to $V$ obtained by erasing the loops in chronological
order from
a simple random walk started at a and stopped upon hitting $V$.
One can ask whether or not there is a corresponding result for LERW
where the scaling
factor is $n^{-1/d}$ and $d$ is the fractal dimension of the paths.
Schramm~\cite{Schramm} introduced a process, now called the
Schramm--Loewner evolution
($\mathit{SLE}_\kappa$),
as a candidate for the scaling limit and gave a strong argument why $\mathit{SLE}_2$
should be the scaling limit of LERW. In order to use the Loewner
equation, he
used a capacity parametrization which is not the
parametrization one would obtain by taking the limit above.
In~\cite{LSWlerw}, it was proved that the scaling limit of planar LERW
\textit{in the capacity parametrization}
is $\mathit{SLE}_2$. However, it is still open whether or not, one can take a limit
as above. There are a number of other models that are known to converge
to $\mathit{SLE}_\kappa$ in the scaling limit using the capacity parametrization:
critical site-percolation on the triangular lattice~\cite{Sperco}, the
level lines of the discrete Gaussian free field~\cite{SSfree},
the interfaces of the random cluster model associated with the Ising
model~\cite{Sising}.

A start to taking limits as above is to define the natural parametrization
for $\mathit{SLE}$.
Possible definitions and constructions for a natural parametrization were
suggested in~\cite{LStime}. As well as giving conjectures, one definition
was proposed in terms of the Doob--Meyer decomposition of a path. We review
this construction in Section~\ref{review}. Although they conjectured
that this
definition is valid for all $\kappa< 8$, they were only able to
establish the
result for $\kappa< \kappa_0 = 4(7-\sqrt{33})$. The technical problem
came from difficult second moment estimates for the reverse Loewner flow.

In this paper, we prove that the definition in~\cite{LStime}
is valid for $\kappa< 8$.
Instead of using the reverse Loewner flow, we use a difficult estimate of
Beffara~\cite{Beffara} on the forward Loewner flow to establish the necessary
uniform integrability to apply the Doob--Meyer theorem. Beffara's estimate
was the key step in his proof of the Hausdorff dimension of
$\mathit{SLE}_\kappa$ curves. This estimate has recently been improved~\cite{LWerness}
and used to
establish a multi-point Green's function for $\mathit{SLE}_\kappa$. We use this Green's
function to give an appropriate two-interior-point local martingale. By
establishing
a correlation inequality for this Green's function, we are able to give a
relatively simple proof of the existence of the natural parametrization.

\subsection{Notation}

In this subsection, we set up the notation for $\mathit{SLE}_\kappa$. For more
background, see, for example, \cite
{wernernotes,GKnotes,Cardynotes,lawlerbook,LPark}.

Throughout this paper, we let
$\kappa< 8$ and $a = 2/\kappa> 1/4$, and we allow all
constants to depend on $\kappa$. We let $d =
1 + \frac\kappa8 = 1 + \frac1{4a}$ be the Hausdorff
dimension of the paths. We parametrize
the maps so that
%
\begin{equation} \label{loeweq}
\p_tg_t(z) = \frac{a}{g_t(z) -U_t},
\end{equation}
where $U_t = - B_t$ is a standard Brownian motion. It can be
shown~\cite{RS,LSWlerw} that a.s. $g_t^{-1}$ extends continuously
to $\overline\Half$ for every $t\ge0$ and $\gamma
(t):=g_t^{-1}(U_t)$ is a continuous curve
which is the $\mathit{SLE}$ path. The domain of definition of $g_t$ is the unbounded
connected component $H_t$ of $\Half\setminus\gamma[0,t]$.
We shall denote by $K_t$ the closure of the complement of $H_t$ in
$\Half$. If $z \in\overline
\Half\setminus\{0\}$, let
\[
Z_t(z) = X_t(z) + i Y_t(z) =
g_t(z) + B_t.
\]
Then the Loewner equation can be written as
\begin{eqnarray*}
dZ_t(z) &=& \frac{a}{Z_t(z)} \, dt + dB_t,
\\
dX_t(z) &=& \frac{a X_t(z)}{|Z_t(z)|^2} \, dt + dB_t,\qquad
\p_t Y_t(z) = - \frac{a Y_t}{|Z_t|^2}.
\end{eqnarray*}
The Loewner equation is valid up to the time
\[
T_z = \sup\{t\dvtx Y_t(z) > 0\}.
\]
Let
\[
\Upsilon_t(z) = \frac{Y_t(z)}{|g_t'(z)|},\qquad
\theta_t(z) = \arg Z_t(z),\qquad
S_t(z) = \sin\theta_t(z) = \frac{Y_t(z)}{|Z_t(z)|}.
\]
It is not difficult to see that
$\Upsilon_t(z)$ is $1/2$ times the conformal radius of $H_t$
with respect to $z$, by which we mean
that if $f\dvtx\Disk\rightarrow H_t$ is a conformal transformation with
$f(0) = z$, then $|f'(0)| = 2 \Upsilon_t(z)$. Using
the Schwarz lemma and
the Koebe $(1/4)$-theorem, we can see that
\[
\Upsilon_t(z) \asymp_2 \dist\bigl(z,\R\cup\gamma(0,t]\bigr),
\]
where $\asymp_2$ means that both sides are bounded above by
$2$ times the other side.
Using the Loewner equation (\ref{loeweq}), we see that
%
\begin{equation} \label{may224}
\p_t \Upsilon_t(z) = - \Upsilon_t(z) \frac{2a Y_t(z)^2}
{|Z_t(z)|^4}.
\end{equation}
In particular, $\Upsilon_t(z)$ decreases
with $t$ and hence we can define
\[
\Upsilon(z) = \lim_{t \rightarrow T_z-}
\Upsilon_t(z),
\]
which satisfies
\[
\Upsilon(z) \asymp_2 \dist[z,\R\cup\gamma(0,\infty)].
\]
Using It\^o's formula, we can see that
%
\begin{equation} \label{may225}
d \theta_t(z) = \frac{(1-2a) X_t(z) Y_t(z)}{
|Z_t(z)|^4} \, dt - \frac{Y_t(z)}{|Z_t(z)|^2}
\, dB_t.
\end{equation}
The \textit{Green's function} (for $\mathit{SLE}_\kappa$ from $0$
to $\infty$ in $\Half$) is defined by
\[
G(z) = y^{d-2} [\sin\arg z ]^{4a-1}
= y^{{1}/({4a}) + 4a-2} |z|^{1-4a},
\]
where $z = x+iy = |z| e^{i\theta}$.
It\^o's formula shows that
%
\begin{equation} \label{jun81}
M_t(z) = |g_t'(z)|^{2-d} G(Z_t(z))
= \Upsilon_t(z)^{d-2} S_t(z)^{4a-1},\qquad
t < T_z,
\end{equation}
is a local martingale satisfying
\[
dM_t(z) = \frac{ (1-4a) X_t(z)}{|Z_t(z)|^2} M_t(z) \,
dB_t.
\]

More generally, if $D$ is a simply connected domain, $z \in D$,
and $w_1,w_2$ are distinct points in $\p D$, we can define
$\Upsilon_D(z), S_D(z;w_1,w_2)$ using the following
scaling rules: if $f\dvtx D \rightarrow f(D)$ is
a conformal transformation, then
\[
\Upsilon_{f(D)}(f(z)) = |f'(z)| \Upsilon_D(z),\qquad
S_{f(D)}(f(z);f(w_1),f(w_2)) =
S_D(z;w_1,w_2).
\]
The Green's function $G_D(z;w_1,w_2)$ is defined by
\[
G_D(z;w_1,w_2) = \Upsilon_D(z)^{d-2} S_D(z;w_1,w_2)
^{4a-1}
\]
and satisfies the
scaling rule
%
\begin{equation} \label{greenscale}
G_{D}(z;w_1,w_2) = |f'(z)|^{2-d}
G_{f(D)}(f(z);f(w_1),f(w_2)).
\end{equation}
Under this definition, the local martingale in (\ref{jun81})
can be rewritten as
\[
M_t(z) = G_{H_t}(z;\gamma(t),\infty).
\]
The following easy lemma is useful for estimating
$S_D(z;w_1,w_2)$.
%
\begin{lemma} Suppose $D$ is a simply connected domain,
$w_1,w_2 \in\p D$, and $f\dvtx\Half\rightarrow D$ is a
conformal transformation with $f(0) = w_1, f(\infty)
= w_2$. Let $\p_+ = f[(0,\infty)], \p_- = f[(-\infty,0)]$.
If $z \in D$, let
\[
q = q_D(z;w_1,w_2) =
\min\{h_D(z,\p_+), h_D(z,\p_-)\},
\]
where $h_D$ denotes harmonic measure. Then
%
\begin{equation} \label{jun91}
2q \leq S_D(z;w_1,w_2) \leq\pi q.
\end{equation}
\end{lemma}
\begin{pf} We first
note that $q,h_D,S_D$ are conformal invariants, so it suffices
to prove the result for $D = \Half, w_1 = 0, w_2 = \infty$ and
by symmetry we may assume that $\theta:= \arg z \leq\pi/2$.
By explicit calculation, we can see that $\theta= \pi q$, and hence
(\ref{jun91}) follows from the estimate
\[
\frac{2}{\pi} \leq\frac{\sin x}x \leq
1,\qquad 0 <x \leq\frac\pi2.
\]
\upqed\end{pf}

Using Girsanov's theorem, it can be shown that
\[
\lim_{\epsilon\rightarrow0+} \epsilon^{d-2} \Prob\{\Upsilon_\infty(z)
\leq\epsilon\} = c_* G(z),\qquad c_* = 2 \biggl[{\int_0^\pi\sin^{4a} x \,
dx}\biggr]^{-1}.
\]
A proof of this is given in~\cite{LPark}, but we include
a self-contained proof in this paper (Proposition
\ref{junprop1}) which also estimates the error term.
It follows that if $D$ is a simply connected domain, $z \in D$;
$\gamma$ is an $\mathit{SLE}_\kappa$ curve connecting distinct boundary
points $w_1,w_2 \in\p D$, and $D_\infty$ denotes the component of
$D \setminus\gamma$ containing $z$, then
\[
\Prob\{\Upsilon_{D_\infty}(z) \leq\epsilon\}
\sim c_* \epsilon^{2-d}
G_D(z;w_1,w_2),\qquad \epsilon\rightarrow0{+}.
\]

\subsection{Review of natural parametrization} \label{review}
We will briefly review the construction in~\cite{LStime}. The starting
point is the following proposition.
\renewcommand{\theprop}{\Alph{prop}}
\begin{prop}[(\cite{LStime})]\label{propA}
Suppose that there exists a parametrization for
$\mathit{SLE}_\kappa$ in $\Half$ satisfying the domain Markov
property and the conformal invariance assumption. For a fixed Lebesgue
measurable subset $S \subset\Half$,
let $\Theta_t(S)$ denote the process that gives the amount of time in
this parametrization
spent in $S$ before time $t$
(in the half-plane capacity parametrization), and suppose further that
$\Theta_t(S)$ is
$\mathcal F_t$ adapted for all such $S$. If $\mathbb E \Theta_\infty
(D)$ is finite for
all bounded domains $D$, then it must be the case that (up to
multiplicative constant)
\[
\E\Theta_\infty(D) = \int_D G(z) \, dA(z),
\]
where $dA$ denotes integration with respect to area, and more
generally,
\[
\mathbb E[\Theta_\infty(D) - \Theta_t(D)|
\mathcal F_t] = \int_D M_t(z)\, dA(z).
\]
\end{prop}

Let $\domain$ denote the set of bounded domains $D \subset
\Half$ with $\dist(\R,D) > 0$. Write
\[
\domain= \bigcup_{m=1}^\infty
\domain_m,
\]
where $\domain_m$ denotes the set of domains $D$ with
\[
D \subset\{x+iy\dvtx |x| < m, 1/m < y < m\}.
\]
For any process $\Theta_t(D)$ with finite expectations, by Proposition
\ref{propA}, one has
%
\begin{equation} \label{Psi}
\Psi_t(D) =
\E[\Theta_\infty(D) |\F_t] - \Theta_t(D),
\end{equation}
where $\Psi_t(D):=\int_D M_t(z)\, dA(z)$,
which is a supermartingale in $t$ because $M_t(z)$ is a nonnegative
local martingale. It
is also not difficult to prove that $\Psi_t(D)$ is in fact continuous
as a function of $t$ by its definition.
Assuming the conclusion of Proposition~\ref{propA}, the first term on the
right-hand side of (\ref{Psi}) is a martingale and the map
$t \mapsto\Theta_t(D)$ is increasing.
Inspired by the continuous case of the standard\vadjust{\goodbreak}
Doob--Meyer theorem~\cite{DM}: any
continuous supermartingale can be written uniquely as the sum of a
continuous adapted decreasing process
with initial value zero and a continuous local martingale, Lawler and
Shieffeld in~\cite{LStime} introduce the following definition.
\begin{definition*}[(\cite{LStime})]
\begin{itemize}
\item
If $D \in\domain$, then the natural parametrization
$\Theta_t(D)$ is the unique continuous, increasing process such
that
\[
\Psi_t(D) + \Theta_t(D)
\]
is a martingale (assuming such a process exists).

\item If $\Theta_t(D)$ exists for each $D \in\domain$,
the natural parametrization in $\Half$ is given by
\[
\Theta_t = \lim_{m \rightarrow\infty}
\Theta_t(D_m),
\]
where
$D_m = \{x+iy\dvtx |x|<m, 1/m < y < m\}. $
\end{itemize}
\end{definition*}

The main result in that paper is the following.
\begin{theo}[(\cite{LStime})]\label{theoB}
If $\kappa< \kappa_0:= 4(7-\sqrt{33})$,
there is an adapted,
increasing, continuous process $\Theta_{t}(D)$ with $\Theta_0(D) = 0$
such that
\[
\Psi_{t}(D) + \Theta_{t}(D)
\]
is a martingale. Moreover, with probability one for all $t$
%
\begin{eqnarray} \label{aug141}
\Theta_{t}(D) &=& \lim_{n \rightarrow\infty}
\sum_{j \leq t2^n}
\int_\Half\bigl|
\hat f_{({j-1})/{2^{n}}}'(z)\bigr|^d \phi(z 2^{n/2})
G(z)\nonumber\\[-8pt]\\[-8pt]
&&\hspace*{59pt}{}\times
1\bigl\{\hat f_{({j-1})/{2^{n}}}(z) \in D\bigr\}\, dA(z),\nonumber
\end{eqnarray}
where
$\phi(z)$ is defined by $\E[M_1(z)]=M_0(z)(1-\phi(z))$ and \mbox{$\hat
f_s(z)=g_s^{-1}(z+U_s)$}.
\end{theo}

As for the proof, they start by discretizing time and finding an
approximation for
$\Theta_t(D)$. This time discretization is the first step in proving
the Doob--Meyer decomposition
for any supermartingale. The second step is to take the limit. For this purpose,
they use the reverse-time flow for the Loewner equation to derive
uniform second moment estimates for the approximations when $\kappa
<\kappa_0$.
This estimate is the most difficult one in all their derivation.
Then they can take a limit both in $L^2$ and with probability one.

\subsection{Multi-point Green's function}

As the main step in proving the Hausdorff dimension of the
$\mathit{SLE}$ curve, Beffara~\cite{Beffara} proved the following lemma.

\begin{lemma} Suppose $D$ is a bounded subdomain
of $\Half$ with \mbox{$\dist(D,\R) > 0$}. Then there exists $c_D <\infty$
such that if $z,w \in D$ and $\epsilon> 0$,
\[
\Prob\{\Upsilon_\infty(z) \leq\epsilon, \Upsilon_\infty
(w) \leq\epsilon\} \leq c_D \epsilon^{2(2-d)} |z-w|^{d-2}.\vadjust{\goodbreak}
\]
\end{lemma}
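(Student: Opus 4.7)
The plan is to condition on which of $z,w$ the $SLE$ curve first comes within $\epsilon$ of, then apply the strong Markov property together with the one-point estimate in the remaining (random) domain. Write $r=|z-w|$ and, for $\zeta\in\{z,w\}$, let $\tau_\zeta:=\inf\{t\geq 0:\Upsilon_t(\zeta)\leq\epsilon\}$; these are stopping times because $t\mapsto\Upsilon_t(\zeta)$ is continuous and decreasing by \eqref{may22.4}. By symmetry it suffices to estimate $\Prob\{\tau_z\leq\tau_w<\infty\}$. When $\epsilon\geq r/4$, the one-point bound $\Prob\{\tau_z<\infty\}\leq c\,\epsilon^{2-d} G(z)\leq c_D\,\epsilon^{2-d}$ already dominates $c_D\,\epsilon^{2(2-d)} r^{d-2}$ (since $\epsilon/r\geq 1/4$ and $2-d>0$), so we may assume $\epsilon<r/4$.

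On $\{\tau_z<\infty\}$ continuity of $\Upsilon$ forces $\Upsilon_{\tau_z}(z)=\epsilon$, so $\dist(z,\gamma[0,\tau_z])\leq 2\epsilon$ by the Koebe bound $\Upsilon\asymp_2\dist$. With $\delta_D:=\dist(D,\R)>0$ this gives the purely geometric estimate
\[
\dist(w,\partial H_{\tau_z})\geq \min(\delta_D,\,r-2\epsilon)\geq \tfrac12\min(\delta_D,r),
\]
and therefore $\Upsilon_{H_{\tau_z}}(w)\geq c_D'\,r$ (absorbing the bounded diameter of $D$ into the constant when $r\geq\delta_D$). By the strong Markov property of the driving Brownian motion, conditional on $\F_{\tau_z}$ the continuation $\gamma[\tau_z,\infty)$ is a chordal $SLE_\kappa$ in $H_{\tau_z}$ from $\gamma(\tau_z)$ to $\infty$. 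Pulling back by a conformal map to $\Half$, applying the one-point estimate there, and transporting back via the scaling rule \eqref{greenscale} gives
\[
\Prob\{\Upsilon_\infty(w)\leq\epsilon\mid\F_{\tau_z}\}
\leq c\,\epsilon^{2-d}\, G_{H_{\tau_z}}\bigl(w;\gamma(\tau_z),\infty\bigr)
\leq c\,\epsilon^{2-d}\,\Upsilon_{H_{\tau_z}}(w)^{d-2}\leq c_D''\,\epsilon^{2-d} r^{d-2},
\]
where in the middle step we used $G_{H_{\tau_z}}=\Upsilon^{d-2}S^{4a-1}\leq \Upsilon^{d-2}$, the factor $S^{4a-1}\leq 1$ being harmless since $S\leq 1$ and $\kappa<8$ gives $4a-1>0$. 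Multiplying by $\Prob\{\tau_z<\infty\}\leq c_D\,\epsilon^{2-d}$ and taking expectations yields the claimed bound, the case $\tau_w<\tau_z$ being identical after swapping $z$ and $w$.

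The one genuine input is the uniform one-point estimate $\Prob\{\Upsilon_\infty(z)\leq\epsilon\}\leq c\,\epsilon^{2-d} G(z)$ valid for \emph{all} $\epsilon>0$, not merely in the limit $\epsilon\to 0^+$. This is the main obstacle; it is supplied by Proposition \ref{junprop.1}, which uses Girsanov weighting by the local martingale $M_t(z)$ to recast the question as a moment estimate for $S_{\tau_z}^{-(4a-1)}$ under the two-sided radial $SLE_\kappa$ measure, where the hypothesis $\kappa<8$ (equivalently $4a-1>0$) is what makes the moment finite. Given that input, the two-point inequality of the lemma follows from the strong Markov argument above together with the elementary geometric lower bound on $\Upsilon_{H_{\tau_z}}(w)$.
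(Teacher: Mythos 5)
The decomposition by $\tau_z\leq\tau_w$, the strong Markov property at $\tau_z$, and the pull-back of the one-point estimate to $H_{\tau_z}$ are all fine, but the step you call the ``purely geometric estimate'' is wrong, and it is exactly where the whole difficulty of Beffara's lemma lives. Knowing $\dist(z,\gamma[0,\tau_z])\leq 2\epsilon$ gives no lower bound on $\dist(w,\gamma[0,\tau_z])$ comparable to $r$. The set $\gamma[0,\tau_z]$ is a long curve from $0$, not a point near $z$, and on the event $\{\tau_z\leq\tau_w<\infty\}$ it is perfectly allowed to pass within distance of order $\epsilon$ of $w$ at some intermediate time before $\Upsilon(z)$ drops to $\epsilon$. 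The only information one has on $\{\tau_z\leq\tau_w\}$ is $\Upsilon_{\tau_z}(w)\geq\epsilon$ (continuity plus $\tau_w\geq\tau_z$), hence $\dist(w,\partial H_{\tau_z})\geq\epsilon/2$. Feeding that into your chain gives
\[
\Prob\{\Upsilon_\infty(w)\leq\epsilon\mid\F_{\tau_z}\}\leq c\,\epsilon^{2-d}\,\Upsilon_{H_{\tau_z}}(w)^{d-2}\leq c\,\epsilon^{2-d}\cdot\epsilon^{d-2}=c,
\]
which is trivial; the argument then collapses to $\Prob\{\tau_z<\infty\}\leq c_D\,\epsilon^{2-d}$ and never produces the second factor $\epsilon^{2-d}r^{d-2}$.

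This is not a repairable technicality. Controlling the contribution from paths that approach $w$ at an intermediate scale before reaching $z$ is the genuine content of the two-point estimate; a single application of the strong Markov property cannot do it, and a correct version of your argument would, with no further effort, yield the stronger asymmetric bound \eqref{befgen} and its $n$-point analogues, which the paper explicitly attributes to a nontrivial extension of Beffara's multi-scale argument by Lawler and Werness. Note also that the paper does not prove this lemma at all---it is quoted as Beffara's theorem with a citation to \cite{Beffara}---so there is no proof in the paper against which to compare your route; the comparison has to be to Beffara's original argument, which is considerably more involved.
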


Recently, Lawler and Werness~\cite{LWerness} extended Beffara's
argument to show that
%
\begin{equation} \label{befgen}
\Prob\{\Upsilon_\infty(z) \leq\epsilon, \Upsilon_\infty
(w) \leq\delta\} \leq c_D \epsilon^{2-d} \delta^{2-d}
|z-w|^{d-2}.
\end{equation}
Building on this, they show that there is a multi-point Green's
function $G(z,w)$ such that
%
\begin{equation} \label{multipoint}
\lim_{\epsilon,\delta\rightarrow0+} \epsilon^{d-2}
\delta^{d-2}
\Prob\{\Upsilon_\infty(z) \leq\epsilon,
\Upsilon_\infty(w)\leq\delta\} = c_*^2 G(z,w).
\end{equation}
Although a closed form of the function $G(z,w)$ is not given,
it is shown that
%
\begin{equation} \label{may271}
G(z,w) = G(z) G(w) [F(z,w) + F(w,z)],
\end{equation}
where
\[
F(z,w) = \frac{ \E_z^* [|g_T'(w)|^{2-d}
G(Z_T(w)) ]}{G(w)},
\]
$\E^*_z$ denotes expectation with respect to
two-sided radial $\mathit{SLE}_\kappa$ through $z$ (see~Section \ref
{tworadial} for
definitions) and $T = T_z = \inf\{t\dvtx \gamma(t) = z\}$.
Roughly speaking, $G(z,w)$ represents the probability of
going through $z$ and $w$, and\break $G(z) G(w) F(z,w)$ represents
the probability of going first through $z$ and then
through $w$. Using (\ref{greenscale}), we can write
\[
G(w) F(z,w) = \E_z^* [G_{D_T}(w;z,\infty)
].
\]
From the definition, we can see that if $r > 0$,
\[
G(z,w) = r^{2(2-d)} G(rz,rw),\qquad
F(z,w) = F(rz,rw).
\]

More generally, if $D$ is a simply connected domain with
boundary points $z_1,z_2$, we can define $F_D(z,w;
z_1,z_2)$ by conformal invariance.

Let
%
\begin{eqnarray} \label{may301}
M_t(z,w) &=& |g_t'(z)|^{2-d} |g_t'(w)|^{2-d}
G(Z_t(z),Z_t(w))\nonumber\\[-8pt]\\[-8pt]
&=& \frac{M_t(z) M_t(w) G(Z_t(z),Z_t(w))
}{G(Z_t(z)) G(Z_t(w))}.\nonumber
\end{eqnarray}
This is the so-called two-interior-point local martingale. A
similar two-boundary-point local martingale appears in \cite
{schrammzhoudimension}.
Using (\ref{multipoint}), we can see if $\epsilon> 0$
and $T_\epsilon$ is the first time such that $\Upsilon_{T_\epsilon}(z)
\leq\epsilon$ or $\Upsilon_{T_\epsilon}(w)
\leq\epsilon$, then $\E[M_{T_\epsilon}(z,w)] = M_0(z,w) =
G(z,w)$. Hence, by Fatou's lemma, for every stopping
time $T$,
%
\begin{equation} \label{may272}
G(z,w) \geq\E[M_T(z,w)].
\end{equation}

For our
main result, we will need the following two estimates about
$G(z,w)$. The first follows immediately from (\ref{befgen})
and (\ref{multipoint}); establishing the second is the main
technical work in this paper.
%
\begin{lemma}$ $ \label{mainlemma}
\begin{itemize}
\item
Suppose $D$ is a bounded subdomain
of $\Half$ with $\dist(D,\R) > 0$. Then there exists $c_D <\infty$
such that if $z,w \in D$,
\[
G(z,w) \leq c_D |z-w|^{d-2}.
\]

\item There exists $c > 0$ such that for all $z,w \in\Half$,
%
\begin{equation} \label{may274}
G(z,w) \geq c G(z) G(w).
\end{equation}
\end{itemize}
\end{lemma}

Note that (\ref{may274}) is equivalent to saying that there
exists $c$ such that
\[
F(z,w) + F(w,z) \geq c.
\]
We remark that
\[
\inf_{z,w} F(z,w) = 0.
\]
Indeed, one can check that
\[
\lim_{y \rightarrow\infty} F(iy,i/y) =0.
\]
The
basic idea is that if $y$ is large then the chance that the
$\mathit{SLE}$ path goes through $yi$ \textit{and then through} $i/y$ is much
smaller than the probability of going through $i/y$ and then through
$iy$.
%
\begin{corollary} If $D$ is a bounded subdomain
of $\Half$ with $\dist(D,\R) > 0$, then there exists $c_D <\infty$
such that if $z,w \in D$, and $T$ is a stopping time,
%
\begin{equation} \label{may273}
\E[M_T(z) M_T(w)] \leq c_D |z-w|^{d-2}.
\end{equation}
\end{corollary}
\begin{pf} Using (\ref{may301}), (\ref{may272}), and
Lemma~\ref{mainlemma}, we get
\[
\E[M_T(z) M_T(w)] \leq
c \E[M_T(z,w)] \leq c G(z,w) \leq
c_D |z-w|^{d-2}.
\]
\upqed\end{pf}

\subsection{The main theorems}

As in Lawler and Sheffield,
we will prove that there is an adapted,
increasing, continuous process $\Theta_{t}(D)$ with $\Theta_0(D) = 0$
such that $\Psi_{t}(D) + \Theta_{t}(D) $ is a martingale. The basic
idea of the proof is the same.
Here, we show how (\ref{may273})
yields
the
uniform integrability (class $\mathfrak{D}$) needed to establish
the existence of the martingale.
%
\begin{theorem} \label{main}
If $0<\kappa< 8$,
there is an adapted,
increasing, continuous process $\Theta_{t}(D)$ with $\Theta_0(D) = 0$
such that
\[
\Psi_{t}(D) + \Theta_{t}(D)
\]
is a martingale. Moreover, let
\[
\Theta_{t,n}(D) =
\sum_{j \leq t2^n}
\int_\Half\bigl|
\hat f_{({j-1})/{2^{n}}}'(z)\bigr|^d \phi(z 2^{n/2}) G(z)
1\bigl\{\hat f_{({j-1})/{2^{n}}}(z) \in D\bigr\} \, dA(z),\vadjust{\goodbreak}
\]
then for any stopping time $T$,
\[
\lim_{n \to\infty} \E[|\Theta_{T,n}(D)- \Theta_T(D)
| ]=0.
\]
\end{theorem}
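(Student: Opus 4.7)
The first step is to identify $\Theta_{t,n}(D)$ with the discrete Doob compensator of the continuous supermartingale $\Psi_t(D)$ along the dyadic mesh of width $2^{-n}$. Performing the change of variables $w = \hat f_{(j-1)/2^{n}}(z)$, whose Jacobian is $|g'_{(j-1)/2^{n}}(w)|^{2}$, and using the definition of $\phi$ together with Brownian scaling of the Loewner flow to identify the single-step defect of $M_t(w)$ with $M_{(j-1)/2^n}(w)\,\phi(Z_{(j-1)/2^n}(w)\,2^{n/2})$ on $\{w \in H_{(j-1)/2^n}\}$, the formula in the statement becomes
\[
\Theta_{t,n}(D) \;=\; \int_D \xi^n_t(w)\, dA(w), \qquad
\xi^n_t(w) \;:=\; \sum_{j \leq t 2^n} \bigl[M_{(j-1)/2^n}(w) - \E[M_{j/2^n}(w) \mid \F_{(j-1)/2^n}]\bigr].
\]
Hence $\Psi_t(D) + \Theta_{t,n}(D)$ is a martingale at dyadic times and $\E[\Theta_{t,n}(D)] \leq \Psi_0(D) < \infty$.

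The crux is to show that $\{\Theta_{T,n}(D)\}_n$ is uniformly integrable (class $\mathfrak{D}$) uniformly over stopping times $T$, which will let us pass to the continuous-time limit. I plan to establish the stronger uniform $L^2$ bound $\sup_n \E[\Theta_{T,n}(D)^2] < \infty$. By Fubini the left-hand side equals $\int_D \int_D \E[\xi^n_T(z)\, \xi^n_T(w)]\, dA(z)\, dA(w)$, so a pointwise correlation bound suffices. Since $\xi^n(z)$ is the predictable increasing compensator of the nonnegative supermartingale $M_t(z)$, an integration-by-parts identity for a pair of such compensators reduces the correlation $\E[\xi^n_T(z)\, \xi^n_T(w)]$ to terms involving the two-point second moment $\E[M_T(z)\, M_T(w)]$ and one-point values $M_0(z), M_0(w)$, with constants independent of $n$. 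Invoking \eqref{may27.3} of the Corollary to Lemma~\ref{mainlemma}, these are controlled by $c_D |z-w|^{d-2}$; since $d \in (1,2)$, the function $|z-w|^{d-2}$ is integrable on $D \times D$, yielding the required uniform $L^2$ bound.

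With the uniform $L^2$ bound in hand, the same Fubini and integration-by-parts applied to the telescoping difference $\Theta_{T,m}(D) - \Theta_{T,n}(D)$ (which, for $m > n$, is itself the martingale increment between two compensators of the same supermartingale on nested dyadic meshes) shows $L^2$-Cauchy convergence in $n$. Its limit defines $\Theta_T(D)$, and in particular the convergence holds in $L^1$, which is the second assertion of the theorem. Uniform $L^2$ boundedness over stopping times implies class $\mathfrak{D}$, and the Doob--Meyer theorem then produces a uniformly integrable continuous martingale decomposition $\Psi_t(D) + \Theta_t(D)$, with $\Theta_t(D)$ adapted, increasing, and starting at zero. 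Continuity of $\Theta_t(D)$ follows from continuity of $\Psi_t(D)$ together with the uniqueness part of Doob--Meyer.

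The single genuine technical difficulty is the uniform two-point correlation bound $\E[\xi^n_T(z)\,\xi^n_T(w)] \leq C\,|z-w|^{d-2}$. The Fubini reduction and the integration-by-parts manipulation on predictable compensators are essentially routine; the substantive input is the diagonal integrability of the two-interior-point Green's function $G(z,w)$, which is \emph{precisely} what the Corollary to Lemma~\ref{mainlemma} supplies through the combination of \eqref{befgen}, \eqref{multipoint}, and the lower bound \eqref{may27.4}. Establishing \eqref{may27.4}, equivalently the uniform positivity $\inf_{z,w}[F(z,w)+F(w,z)] > 0$, is the central new piece of analysis of the paper, and it is what replaces the reverse-flow estimate of Lawler--Sheffield and extends the construction from $\kappa < 4(7-\sqrt{33})$ to all $\kappa < 8$.
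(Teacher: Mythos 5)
You have correctly located the decisive analytic input: the two--point moment bound $\E[M_T(z)M_T(w)]\leq c_D|z-w|^{d-2}$ of the Corollary, whose diagonal integrability on $D\times D$ is what makes everything work, and whose proof rests on the lower bound \eqref{may27.4} that is the real content of the paper. However, the way you propose to use it diverges from the paper, and in a way that contains a genuine gap. You aim for a uniform-in-$n$ bound on $\E[\Theta_{T,n}(D)^2]$, i.e.\ a second moment estimate on the \emph{discrete compensators}, and you describe the passage from $\E[M_T(z)M_T(w)]$ to the correlation of compensators $\E[\xi^n_T(z)\xi^n_T(w)]$ as an ``essentially routine'' integration by parts. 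It is not. For a nonnegative supermartingale potential $X=M-A$ one has $\E[A_\infty-A_\tau\mid\F_\tau]=X_\tau$, and the usual Garsia--Neveu route to $\E[A_\infty^2]$ requires dominating $X_\tau$ by $\E[Y\mid\F_\tau]$ for a \emph{fixed} $Y\in L^2$ (e.g.\ $Y=\sup_t X_t$), which for a supermartingale is not controlled by $\sup_\tau\E[X_\tau^2]$; there is no Doob-type maximal inequality to bridge that gap. This is precisely the bottleneck in Lawler--Sheffield, where the uniform second moment bound on the discretized approximants is singled out in this paper's own introduction as ``the most difficult one in all their derivation,'' and it is what limited them to $\kappa<4(7-\sqrt{33})$. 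Your plan essentially reverts to that route while asserting the hard step is easy.

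The paper deliberately avoids that step. It applies the two--point bound to show only
\[
\E\bigl[\Psi_T(D)^2\bigr]=\int_D\int_D\E\bigl[M_T(z_1)M_T(z_2)\bigr]\,dA(z_1)\,dA(z_2)\leq c_D\int_D\int_D|z_1-z_2|^{d-2}\,dA\,dA<\infty
\]
uniformly over stopping times $T$ --- this is indeed a one--line application of the Corollary, and it establishes that the \emph{supermartingale} $\Psi_t(D)$ is of class $\mathfrak D$. Doob--Meyer (Theorem~C) then produces the continuous increasing compensator $\Theta_t(D)$ with no further estimate. For the second assertion, the paper does not prove $L^2$ (or even uniform $L^1$) bounds on $\Theta_{T,n}(D)$ at all: it defines the piecewise-dyadic supermartingales $\Psi^n_t(D)=\E[\Psi_{(i+1)2^{-n}}(D)\mid\F_t]$, which increase to $\Psi_t(D)$ as $n\to\infty$, identifies their compensators $A^n_t$ with $\Theta_{t,n}(D)$ via the change of variables you describe, and then invokes Dellacherie--Meyer's Theorem~VII.20 (Theorem~D here): if a sequence of positive supermartingales increases to a regular class-$\mathfrak D$ supermartingale, then the associated compensators converge in $L^1$ at every stopping time. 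That theorem is exactly what replaces the second--moment estimate on the approximants and is the structural point of the argument. If you want to salvage your approach, you would need to actually supply the missing correlation bound for the discrete compensators --- which is a substantial piece of work, not an afterthought --- or else switch to the Dellacherie--Meyer route the paper uses.
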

\begin{remark*}
From Theorem~\ref{theoB}, we know that $\E M_1(z)=M_0(z)(1-\phi(z))< M_0(z)$. So
$M_t(z)$ is a
local martingale and a supermartingale
which is not a proper martingale. This implies that $ \Psi_{t}(D) $ is
not a proper martingale too.
Since the theorem establishes that
$\Psi_{t}(D) + \Theta_{t}(D) $ is actually a martingale,
$\Theta_{t}(D)$ is nontrivial. In other words, it is not identically zero.
\end{remark*}

Before discussing how to prove Theorem~\ref{main}, let us briefly
review Doob--Meyer decomposition for supermartingales of class
$\mathfrak{D}$. First, we assume that $\Psi_t$ is a supermartingale
with respect to a filtration ${\cal F}_t$, defined on the interval $[0,
\infty)$. We also suppose that ${\cal F}_t$ satisfies the usual
conditions. The supermartingale $\{\Psi_t, {\cal F}_t, t\geq0\}$ is
said to be of \textit{class $\mathfrak{D}$} if the family $ \{ \Psi
_{T}\dvtx T$ is an almost surely finite stopping time$\}$ is
uniformly integrable. The following result is about Doob--Meyer
decomposition for supermartingales of class~$\mathfrak{D}$. One can
refer to Section 1.4 of~\cite{karatzasshreve} for more details.
\begin{theo}[(Doob--Meyer decomposition~\cite{karatzasshreve})]\label{theoC}
Let $\{\Psi_t, {\cal F}_t, t\geq0\}$ be a continuous
supermartingale of class $\mathfrak{D}$.
Then there exists a continuous predictable, nondecreasing process $\{
A_t, {\cal F}_t, t\geq0\}$, such that
$A_0 = 0$, $A_{\infty}$ is integrable, and
\[
\Psi_t = \mathcal{M}_t - A_t,
\]
where $\mathcal{M}_t:=\E[A_{\infty} + \Psi_{\infty}|{\cal F}_t]$
is a
martingale. This decomposition is unique
up to indistinguishability, that is, if $\{\mathcal{M}_t',{\cal F}_t,
t\geq0\}$ and
$\{A_t', {\cal F}_t, t\geq0\}$ are a martingale and a predictable,
nondecreasing process
satisfying the above
properties, respectively, then
\[
\Prob\{\mathcal{M}_t = \mathcal{M}_t', A_t = A_t', \forall
t\geq0\} = 1.
\]
\end{theo}

In order to prove Theorem~\ref{main}, we need one result from Section
20 of Chapter VII in~\cite{DM} about Doob--Meyer decompositions of an
increasing sequence of positive supermartingales.
\begin{theo}[(\cite{DM})]\label{theoD}
Let $\{\Psi_t^n, {\cal F}_t, t\geq
0\}$ be a sequence of positive supermartingales, whose limit
$\{\Psi_t, {\cal F}_t, t\geq0\}$ belongs to class $\mathfrak{D}$ and
is regular. Let $A_t^n$ and $A$ denote the nondecreasing processes
associated with $\Psi_t^n$ and $\Psi_t$, respectively. Then for any
stopping time $T$,
\[
\lim_{n \to\infty} \E[| A_T^n-A_T|]=0.
\]
\end{theo}
\begin{pf*}{Proof of Theorem~\ref{main} given Lemma~\ref{mainlemma}}
From Theorem~\ref{theoC}, in order to show the existence of $\Theta_{t}(D)$ with the
desired properties,\vadjust{\goodbreak} we need to find a $c(D) < \infty$ such that for
every stopping time $T$,
%
\begin{equation} \label{u-b}
\E[\Psi_T^2(D)] \leq c(D).
\end{equation}
From (\ref{may273}), we have
\begin{eqnarray*}
\E[\Psi_T^2(D)] & = & \int_D\int_D \E[M_T(z_1)M_T(z_2)] \,dA(z_1)\,dA(z_2)
\\& \leq& c_D
\int_D\int_D |z_1-z_2|^{d-2} \,dA(z_1)\,dA(z_2) \leq c(D).
\end{eqnarray*}
This gives the first result. Now we turn to the second result. For all
$t$, we set
\[
\Psi_t^n(D)=\E\bigl[\Psi_{(i+1)2^{-n}}(D)|{\cal F}_t\bigr] \qquad\mbox{if }
i2^{-n} \leq t< (i+1)2^{-n}.
\]
Hence, $\{\Psi_t^n(D), {\cal F}_t, t\geq0\}$ is a positive
supermartingale bounded above by $\Psi_t(D)$, which increases to $\Psi
_t(D)$ as $n\to\infty$. Let $A_t^n$ be the nondecreasing process
associated with $\Psi_t^n(D)$. Then for $i2^{-n} \leq t< (i+1)2^{-n}$,
\[
A_t^n=\sum_{0\leq k<i}\E\bigl[\Psi_{k2^{-n}}(D)-\Psi
_{(k+1)2^{-n}}(D)|{\cal F}_{k2^{-n}}\bigr].
\]
Actually, by the change of variables and the scaling rule of $\phi$,
we have
\[
A_t^n=\Theta_{t,n}(D).
\]
By Theorem~\ref{theoD}, we can complete the proof.
\end{pf*}
\begin{remark*} Although we have used the existence of the multi-point
Green's function $G(z,w)$ as established in~\cite{LWerness}, we could have
proven our result here without its existence. In fact, an earlier
draft of our paper derived the theorem from Beffara's estimate
replacing
(\ref{may274}) with
\[
\Prob\{\Upsilon(z) \leq\epsilon, \Upsilon(w) \leq\delta
\}
\geq c \Prob\{\Upsilon(z) \leq\epsilon\}
\Prob\{\Upsilon(w) \leq\delta\}.
\]
However, the argument is cleaner when written in terms of $G(z,w)$
so we use it here.
\end{remark*}

The next theorem shows that our natural parametrization can be seen as
conformal Minkowski measure (although is not
the same definition as the conformal Minkowski content defined
in~\cite{LStime}).
%
\begin{theorem} \label{main2}
Let $\gamma^{\epsilon}(0,t]=\{z\in\Half\dvtx\sup_{0\leq s\leq
t}M_s(z)\geq\epsilon^{d-2}\}$.
If $0<\kappa< 8$, then for any stopping time $T$,
\[
\lim_{\epsilon\to0} \E\bigl[\bigl|\epsilon^{d-2}A\bigl(D\cap
\gamma^{\epsilon}(0,T]\bigr)-\Theta_T(D) \bigr| \bigr]=0.
\]
\end{theorem}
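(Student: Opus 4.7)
The plan is to realize $\epsilon^{2-d}A(D\cap\gamma^{\epsilon}(0,\cdot])$ as the Doob--Meyer non-decreasing part of an auxiliary continuous supermartingale $\Psi^{\epsilon}_t(D)$ that monotonically approximates $\Psi_t(D)$, and then to pass to the $L^1$-limit via Theorem D.

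For each $z\in\Half$, define the stopping time $\tau_\epsilon(z):=\inf\{s\ge 0:M_s(z)\ge \epsilon^{2-d}\}$, so that $\{z\in\gamma^\epsilon(0,t]\}=\{\tau_\epsilon(z)\le t\}$. Continuity of $s\mapsto M_s(z)$ gives $M_{\tau_\epsilon(z)}(z)=\epsilon^{2-d}$ on $\{\tau_\epsilon(z)<\infty\}$, so the stopped non-negative local martingale $N^\epsilon_t(z):=M_{t\wedge\tau_\epsilon(z)}(z)$ is uniformly bounded by $\epsilon^{2-d}$ and is therefore a uniformly integrable martingale. Writing $N^\epsilon_t(z)=M_t(z)\mathbf{1}_{\{t<\tau_\epsilon(z)\}}+\epsilon^{2-d}\mathbf{1}_{\{\tau_\epsilon(z)\le t\}}$, integrating over $z\in D$, and applying Fubini (valid since $N^\epsilon_t\le\epsilon^{2-d}$ and $D$ is bounded) yields the martingale identity
\[\int_{D}N^\epsilon_t(z)\,dA(z)\;=\;\Psi^\epsilon_t(D)+\epsilon^{2-d}A\bigl(D\cap\gamma^\epsilon(0,t]\bigr),\]
where $\Psi^\epsilon_t(D):=\int_{D}M_t(z)\mathbf{1}_{\{t<\tau_\epsilon(z)\}}\,dA(z)$. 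Because the level set $\{z:\tau_\epsilon(z)=t\}$ has zero planar measure for each fixed $t$, the map $t\mapsto\epsilon^{2-d}A(D\cap\gamma^\epsilon(0,t])$ is continuous and non-decreasing, $\Psi^\epsilon_t(D)$ is a continuous non-negative supermartingale, and uniqueness of Doob--Meyer identifies $\epsilon^{2-d}A(D\cap\gamma^\epsilon(0,\cdot])$ as its compensator.

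Next I verify the monotone approximation $\Psi^\epsilon_t(D)\to\Psi_t(D)$ as $\epsilon\to 0$. For Lebesgue-a.e.\ $z\in D$ the SLE curve does not pass through $z$, so $\sup_{s}M_s(z)<\infty$ almost surely; for such $z$, in the limit $\epsilon\to 0$ the stopping time $\tau_\epsilon(z)$ moves past any fixed time and the truncations $M_t(z)\mathbf{1}_{\{t<\tau_\epsilon(z)\}}$ approach $M_t(z)$ monotonically. Pointwise monotone convergence combined with the uniform $L^2$-bound \eqref{u-b} (which gives uniform integrability of $\{\Psi_T:T\text{ a stopping time}\}$) upgrades the convergence to $L^1$ at every stopping time $T$.

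Finally, Theorem D applies: the limit $\Psi_t(D)$ is continuous (hence regular) and of class $\mathfrak{D}$ by \eqref{u-b}, with Doob--Meyer compensator $\Theta_t(D)$ by Theorem~\ref{main}. Theorem D then delivers
\[\lim_{\epsilon\to 0}\E\bigl|\epsilon^{2-d}A\bigl(D\cap\gamma^\epsilon(0,T]\bigr)-\Theta_T(D)\bigr|=0\]
at every stopping time $T$, which is Theorem~\ref{main2}. The main delicate point is securing the monotone pointwise convergence $\Psi^\epsilon_t(D)\to\Psi_t(D)$, which hinges on $\sup_sM_s(z)<\infty$ holding for Lebesgue-a.e.\ $z\in D$; the Fubini step, continuity of $t\mapsto A(D\cap\gamma^\epsilon(0,t])$, verification of class $\mathfrak{D}$, and the appeal to Theorem D are then routine.
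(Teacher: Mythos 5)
Your proof follows essentially the same route as the paper: you realize $\epsilon^{2-d}A(D\cap\gamma^\epsilon(0,\cdot])$ as the Doob--Meyer compensator of the approximating supermartingale $\int_D M_{t\wedge\tau_\epsilon(z)}(z)\mathbf{1}\{z\notin\gamma^\epsilon(0,t]\}\,dA(z)$, note the monotone convergence of these supermartingales to $\Psi_t(D)$, and invoke Theorem D together with the uniform second-moment bound \eqref{u-b}; the paper's proof is identical in structure, merely terser. One bookkeeping remark: you define $\tau_\epsilon(z)$ with threshold $\epsilon^{2-d}$, copying the theorem statement, but since $d<2$ this tends to $0$ as $\epsilon\to 0$, under which reading $\tau_\epsilon(z)$ would shrink and your monotone-convergence step would go the wrong way; the intended threshold is $\epsilon^{d-2}\to\infty$ (which is what the paper's own proof uses via $\tilde\tau_\epsilon(z)=\inf\{t:M_t(z)\geq\epsilon^{d-2}\}$), and with that correction your argument is sound.
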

\begin{pf*}{Proof of Theorem~\ref{main2} given Lemma~\ref{mainlemma}}
Note that
\[
\epsilon^{d-2}A\bigl(D\cap\gamma^{\epsilon}(0,t]\bigr)=
\int_D M_{t\wedge\tilde\tau_{\epsilon}(z)}(z)I\{z\in\gamma
^{\epsilon}(0,t] \}\,dA(z),\vadjust{\goodbreak}
\]
where $\tilde\tau_{\epsilon}(z)=\inf\{t\geq0\dvtx M_t(z)\geq
\epsilon^{d-2}\}$. The above integral appears in the identity
\begin{eqnarray*}
&&\int_D M_{t\wedge\tilde\tau_{\epsilon}(z)}(z)I\{z\notin
\gamma^{\epsilon}(0,t] \}\,dA(z) \\
&&\qquad=\int_D M_{t\wedge\tilde\tau_{\epsilon}(z)}(z)\,dA(z)-\int
_D M_{t\wedge\tilde\tau_{\epsilon}(z)}(z)I\{z\in\gamma
^{\epsilon}(0,t] \}\,dA(z).
\end{eqnarray*}
This is actually the Doob--Meyer decomposition of the
supermatingale\break
$\int_D M_{t\wedge\tilde\tau_{\epsilon}(z)}(z)I\{z\notin
\gamma^{\epsilon}(0,t] \}\,dA(z)$. By Theorem~\ref{theoD} and (\ref{u-b}), we
can complete the proof.
\end{pf*}

\subsection{Outline of the paper}

In Section~\ref{tworadial}, we will develop the theory of two-sided
radial $\mathit{SLE}$ in order to prove
Proposition~\ref{hullprop}. Although this was discussed somewhat in
\cite{LPark},
our treatment here is self-contained. One main goal is the refined ``one-point''
estimate in Proposition~\ref{junprop1}. The first step of the proof
goes back
to~\cite{RS} in which the radial parametrization
was used to establish a weaker form of the estimate. Here we use
Girsanov's theorem to
reduce the question to the rate of convergence to equilibrium of a simple
one-dimensional diffusion. The sharp estimate is used
in~\cite{LWerness} to improve Beffara's estimate and show existence of the
multi-point Green's function.
When considering radial $\mathit{SLE}_\kappa$
going through points $z = x+iy$ with $|x| \gg y$, it is useful
to compare this to a process headed to $x$. We call the
appropriate one-dimensional process
``two-sided chordal $\mathit{SLE}_\kappa$'' and give some of its
properties.

The only thing we need to prove
Theorem~\ref{main} is the estimate (\ref{may274}) which will be
handled in Section~\ref{Section3}.

\section{Two-sided radial $\mathit{SLE}$} \label{tworadial}

One can see that with probability one\break $M_{T_z-}(z) = 0$; indeed, with
probability one $\Upsilon(z) >0$ and $S_{T_z-}(z) = 0$.
This local
martingale blows up on the event of probability zero that
$z \in\gamma(0,\infty)$. To be more precise, let
\[
\tau_\epsilon(z) = \inf\{t\dvtx \Upsilon_t(z) \leq
\epsilon\}.
\]
Then
$M_{t \wedge\tau_\epsilon(z)}(z)$ is a martingale.
For convenience, we will fix $z$ and write $M_t,X_t,Y_t,\tau_\epsilon,
\ldots$ for
$M_t(z), X_t(z),Y_t(z),\tau_\epsilon(z),\ldots\,$.

\textit{Two-sided radial $\mathit{SLE}_\kappa$
through $z$} is the process
obtained from the Girsanov transformation by weighting by the
local martingale $M_t $. We should really call this two-sided
radial $\mathit{SLE}_\kappa$ from $0$ to $\infty$ going through $z$ stopped
when it reaches $z$, but we will just say two-sided
radial $\mathit{SLE}_\kappa$ through $z$.
This could also be called $\mathit{SLE}_\kappa$
conditioned to go through $z$ (and stopped at $T_z$)
although this is a conditioning on
an event of probability zero. Using (\ref{jun81})
and the Girsanov theorem, we see that for each $\epsilon$
%
\begin{equation} \label{may223}
dB_t = \frac{ (1-4a) X_t }{|Z_t |^2} \, dt
+ dW_t,\qquad t \leq\tau_\epsilon,
\end{equation}
where $W_t$ is a standard Brownian motion in the weighted measure
$\Prob^*$ which can be defined by saying that if $V$ is an event
depending only on $\{B_t\dvtx\break 0 \leq t \leq\tau_\epsilon\}$,\vspace*{-2pt}
\[
\Prob^*(V) = \Prob^*_z(V) = G(z)^{-1}
\E[M_{\tau_\epsilon} 1_V].
\]
We write $\E^*$ for expectations with respect to $\Prob^*$.
There is an implicit $z$ dependence
in $\Prob^*$ and $\E^*$; when we need to make this explicit,
we write $\Prob^*_z,\E^*_z$. It is not hard to show (see Section
\ref{confsec}) that for every
$\epsilon> 0$, $\Prob^*\{\tau_\epsilon< \infty\} = 1$. Since
the SDE in (\ref{may223}) has no $\epsilon$ dependence, we
can let $0 \leq t < T_z$.
Note that\looseness=-1
\begin{eqnarray*}
dX_t &=& \frac{ (1-3a) X_t }{|Z_t |^2} \, dt
+ dW_t,
\\[-3pt]
-U_t &=& B_t = X_t - a \int_0^t
\frac{X_s }{|Z_s |^2} \, ds.\vspace*{-3pt}
\end{eqnarray*}\looseness=0

\subsection{Radial parametrization} \label{confsec}

When studying the behavior of an $\mathit{SLE}_\kappa$ curve near an
interior point $z$, it is useful to reparametrize the curve so that
$\log\Upsilon_t(z)$ decays linearly. This is the parametrization
generally used for radial $\mathit{SLE}$ and for this reason we call it
the \textit{radial parametrization}.
We will study the radial
parametrization in this subsection. We denote this
time change
as a function $\sigma(t)$ and we write
$\hat X_t = X_{\sigma(t)},
\hat Y_t = Y_{\sigma(t)}$, etc.

We define $\sigma$ by asserting that
\[
\hat\Upsilon_t = \Upsilon_{\sigma(t)} = e^{-2at}.\vspace*{-2pt}
\]
Using (\ref{may224}), we see that
\[
-2a \hat\Upsilon_{t} = \p_t[\hat\Upsilon_t
] = -\hat\Upsilon_t \frac{2a \hat Y_t^2}{|\hat
Z_t|^4} [\p_t \sigma(t)]
\]
and hence
\[
\p_t \sigma(t) = \frac
{|\hat Z_t |^4}{\hat Y_t ^2}.
\]
From (\ref{may225}), we see that
$\hat\theta_t:=
\theta_{\sigma(t)}$ satisfies
%
\begin{equation} \label{may226}
d\hat\theta_t = 2a \cot\hat\theta_t \, dt
+ d \hat W_t,
\end{equation}
where $\hat W_t$ is a standard Brownian motion with respect
to the weighted
measure $\Prob^*$. Since $a > 1/4$, the process (\ref{may226})
stays in $(0,\pi)$ for all times. This implies that for
all $\epsilon> 0$,
%
\begin{equation} \label{may227}
\Prob^*\{\tau_\epsilon< \infty\} = 1.
\end{equation}

\subsubsection{The corresponding SDE}

The analysis of two-sided radial $\mathit{SLE}$ relies on detailed
properties of (\ref{may226}) which fortunately
are not very difficult to obtain. In this
subsection, we focus on this SDE and set
$r =2a> 1/2$. Constants in this section
may depend
on $r$. We consider the equation
%
\begin{equation} \label{may222alt}
dX_t = r \cot X_t \,dt + dB_t,\qquad
X_0 \in(0,\pi).\vadjust{\goodbreak}
\end{equation}
In studying this equation, it is useful to note that this
is the equation that one gets if one starts with a Browian motion
$X_t$ and then weights paths locally by $\sin^r X_t$. To
be more precise,
if $X_t$ is a standard Brownian motion,
then
\[
M_t = [\sin X_t]^r \exp\biggl\{-\frac{r(r-1)}{2}
\int_0^t \frac{ds}{ \sin^2 X_s} +\frac12 r^2 t\biggr\}
\]
is a local martingale satisfying
\[
dM_t = r [\cot X_t] M_t \, dX_t.
\]
If we weight by the local martingale, then $X_t$ satisfies
(\ref{may222alt}) where $B_t$ is a Brownian motion in the
weighted measure. In fact, since the weighted process
stays in $(0,\pi)$, one can see that $M_t$ is actually
a martingale.

The equation (\ref{may222alt})
is closely related to the Bessel
equation
%
\begin{equation} \label{bessel}
dX_t = \frac{r}{X_t} \, dt + dB_t.
\end{equation}
This equation is obtained by starting with a Brownian
motion $X_t$ and weighting locally by $X_t^r$, that is,
by the martingale
\[
N_t = X_t^r \exp\biggl\{- \frac{r(r-1)}2
\int_0^t
\frac{ds}{X_s^2} \biggr\},
\]
which satisfies
\[
dN_t = r \frac{1}{X_t} N_t \, dX_t.
\]
The next lemma gives a precise statement about the
relationship.
%
\begin{lemma} There exists $c < \infty$ such that the following is
true. Suppose $\mu_1$
is the probability measure on paths given by
\[
X_t,\qquad 0 \leq t \leq1 \wedge T,
\]
where $X_t$ satisfies (\ref{may222alt}) with
$X_0 = x$ and
\[
T = \inf\{t\dvtx X_t \geq\pi/2\}.
\]
Let $\mu_2$ be the analogous probability measure using
(\ref{bessel}). Then
%
\begin{equation} \label{may283}
\frac1c \leq\frac{d\mu_1}{d\mu_2} \leq c.
\end{equation}
\end{lemma}
\begin{pf}From the explicit forms of $M_t,N_t$,
one can see there exists $c$ such that
\[
c^{-1} N_t \leq M_t
\leq c N_t,\qquad 0 \leq t \leq T \wedge1.
\]
\upqed\end{pf}

By comparison with the Bessel process, we
see that the process satisfying (\ref{may222alt})
never leaves $(0,\pi)$.
The invariant density for the process is
\[
h(x) = C_{2r} \sin^{2r} x,\qquad
C_{2r}^{-1} = \int_0^\pi\sin^{2r}y \, dy.\vadjust{\goodbreak}
\]
(This can be derived in a number of ways. It essentially
follows because the invariant density for Brownian motion
weighted by a function $F$ is proportional to $F^2$.)
In particular,
%
\begin{equation} \label{jun201}
\int_0^\pi h(x) [{\sin x}]^{1-2r} \, dx
= 2 {C_{2r}}.
\end{equation}
If $
t \geq0, x \in(0,\pi)$, we define
\[
\psi(t,x) = \E^x[(\sin X_t)^{1-2r}],
\]
where $X_t$ satisfies (\ref{may222alt}). Note that
$\psi(t,x) = \psi(t,\pi-x)$. Let
\[
\tilde\psi(t,x) = \E^x[X_t^{1-2r}],
\]
where $X_t$ satisfies (\ref{bessel}).
Using the previous lemma, we can see there exist $c_1,c_2$
such that
%
\begin{equation} \label{jun11}
c_1 \psi(t,x) \leq\tilde\psi(t,x)
\leq c_2 \psi(t,x),\qquad 0 < x \leq\pi/2,
0 \leq t \leq1.
\end{equation}
Let
\[
v(t,x) = \bigl[x \vee
\sqrt{t \wedge1}\bigr]^{1-4a}.
\]

The next lemma collects the facts about the SDE that we
will need.
%
\begin{lemma} \label{lemmamay22}
There
exists $c < \infty$
such that the following is true for all $x\in(0,\pi/2]$.
\begin{itemize}
\item If $t \geq1$,
%
\begin{equation} \label{jun10}
|\psi(t,x) - 2C_{2r} |
\leq c e^{- (r + 1/2) t}.
\end{equation}
\item If $t \geq0$,
%
\begin{equation} \label{jun12}
c^{-1} v(t,x) \leq
\psi(t,x) \leq c v(t,x).
\end{equation}
\item For every $\epsilon> 0$, there is a $\delta> 0$
such that if $t >0, 0 < x < \pi$, and $I \subset\R$
with
\[
\Prob^x\{X_t \in I\} \geq\epsilon,
\]
then
%
\begin{equation} \label{jun15}
\E^x[(\sin X_t)^{1-2r}; X_t \in I] \geq
\delta v(t,x).
\end{equation}
\end{itemize}
\end{lemma}
\begin{pf} The fact that
\[
\lim_{t \rightarrow\infty} \psi(t,x) = 2 C_{2r}
\]
follows from (\ref{jun201}).
To get the error estimate, one needs the next eigenvalue. Suppose $x_0
\leq\pi/2$ and let $T$
be the first time that the process reaches $\pi/2$. A simple
application of It\^o's formula shows that
\[
M_{t} = \cos(X_{t}) e^{(r+1/2) t},\vadjust{\goodbreak}
\]
is a martingale. Using this one can show that
\[
\Prob^{x_0}\{T \geq t\} \leq c e^{-(r+1/2) t}
\]
with a constant independent of the starting point and then
a coupling argument can be used to show that if we start two processes,
one at $x_0$ and the other in the invariant density, then the
probability that they do not couple by time $t$ is
$O(e^{-(r+1/2) t})$.

Note that (\ref{jun12}) for $t \geq1$ follows
immediately from (\ref{jun10}), so it suffices to
prove it for
$t \leq1$.

A coupling argument shows that if $x \leq x_1$,
then $\Prob^x\{X_t \leq y\}
\geq\Prob^{x_1}\{X_t \leq y\}$.
Therefore, if $X_0 = x$ and $h = C_{2r} (\sin x)^{2r}
$ denotes the invariant density,
\[
\Prob^x\{X_t \leq y\} \leq\frac{
\int_{0}^x \Prob^{s}\{X_t \leq y\} h(s)
\,ds } {\int_0^x h(s) \, ds}\leq
\frac{\int_0^y h(s) \, ds}
{\int_0^x h(s) \, ds}
\leq c (y/x)^{2r+1}.
\]
By integrating, we see for all $t$,
\[
\E^x[(\sin X_t)^{1-2r}] \leq
c x^{1-2r} + \E^x[(\sin X_t)^{1-2r}; X_t \leq x] \leq
c x^{1-2r}.
\]
This gives the upper bound for $t \leq x^2$;
for the lower bound for these $t$, we need only note that
there is a positive probability that the process starting
at $x$ stays within distance $x/2$ of $x$ up to time
$x^2$.

For the remainder, we consider $x^2 \leq t \leq1$.
The estimates
are more easily done for the Bessel process using scaling.
Since $t \leq1$, it suffices by (\ref{jun11}) to prove
that
\[
\tilde\psi(t,x) \asymp t
^{1/2 - r},\qquad
x^2 \leq t \leq1, 0 < x \leq
\pi/2,
\]
where the implicit constants in the $\asymp$ notation are
uniform over $t,x$. Suppose $X_t$ satisfies (\ref{bessel})
with $X_0 = x$. Then $X_t$ has the same distribution
as $x Y_{t/x^2}$, where $Y_t$ satisfies (\ref{bessel})
with $Y_0 = 1$. We consider $Y_t$ for $1 \leq t \leq
x^{-2}$.

To study this equation, it is convenient to consider
$Z_t = e^{-t/2} Y_{e^{t}}$. Note that
\begin{eqnarray*}
dZ_t & = & \biggl[-e^{-t/2} Y_{e^t} + e^{-t/2} \frac{r e^{t}}
{ Y_{e^{t}}}\biggr]
\, dt + e^{-t/2} \, dB_{e^{t/2}}\\
& = & \biggl[-Z_t + \frac{r}{Z_t} \biggr] \, dt + d\hat B_t,
\end{eqnarray*}
where $\hat B_t$ is a standard Brownian motion.
This is a positive recurrent SDE with invariant density
proportional to
\[
x^{2r} e^{-x^2}.
\]
We consider this with the initial condition
$Z_{-\infty}
= \infty$. (One can show this makes sense by writing the equation for
$R_t = 1/Z_t$ and showing that this can be well defined with
$R_{-\infty} = 0$.) By time $0$, the process is within
a constant multiple of the invariant density.\vadjust{\goodbreak}
All we will need from this are the following easily derived
facts.
\[
\E[Z_t^{1-2r}] \leq c,\qquad t \geq0.
\]
Secondly, for every $\epsilon> 0$, there is a $\delta
>0$ such that if $t \geq0$ and
\[
\Prob\{Z_t \geq K\} \geq\epsilon,
\]
then
\[
\E[Z_t^{1-2r}; Z_t \geq
K ] \geq\delta.
\]
\upqed\end{pf}

\subsubsection{The one-point estimate}

The results of the previous subsection will be used
with $r = 2a > 1/2$.
We use the radial parametrization to prove the
next proposition.
%
\begin{proposition} \label{junprop1}
If $z=x+iy$ and $\theta= \arg z$, then
for $\epsilon\leq y$,
\[
\Prob\{\tau_\epsilon< \infty\} = \epsilon^{2-d}
G(z) \psi(t,\theta),
\]
where $t$ satisfies $\epsilon= y e^{-2ta}$,
that is,
\[
\Prob\{\tau_\epsilon< \infty\} = \epsilon^{2-d}
G(z) \psi\biggl(\frac{\log(y/\epsilon)}{2a},\theta\biggr).
\]
\end{proposition}

It follows that for $1/2 \leq\epsilon\leq1$,
\[
\Prob\{ \tau_{\epsilon y}<\infty\} \asymp
\theta^{4a-1} \bigl[\theta\vee
\sqrt{1
-\epsilon}
\bigr]^{1-4a} = \min\bigl\{1,\bigl(\theta/\sqrt{1-\epsilon}\bigr)^{4a-1}
\bigr\}.
\]
For $\epsilon\leq1/2$,
\[
\Prob\{\tau_{\epsilon y} < \infty\} \asymp\epsilon^{2-d}
\theta^{4a-1}.
\]
\begin{pf*}{Proof of Proposition~\ref{junprop1}}
Let $\tau= \tau_{\epsilon}$. Since $M_{t \wedge\tau}$ is
a martingale, we have
\[
G(z) = \E[M_{t \wedge\tau} ]
= \E[M_t; t < \tau] + \E[M_\tau; \tau
\leq t ].
\]
We can let $t \rightarrow\infty$, and
from (\ref{may227}) and the dominated convergence
theorem, we can deduce that
\[
G(z) = \E[M_{\tau}; \tau< \infty].
\]
Also,
\begin{eqnarray*}
\Prob\{\tau< \infty\} &=&
\epsilon^{2-d} \E[\Upsilon_\tau^{d-2}; \tau< \infty]
= \epsilon^{2-d} \E[ M_\tau S_\tau^{1-4a}
; \tau< \infty]\\
&=& \epsilon^{2-d} G(z) \E^*[S_\tau^{1-4a}].
\end{eqnarray*}
From Girsanov, we see that
\[
\E^*[S_\tau^{1-4a}] = \psi(t,\theta).
\]
\upqed\end{pf*}

The following is a corollary of this and
Lemma~\ref{lemmamay22}.
%
\begin{proposition} \label{propmay}
For $\epsilon\leq y$,
%
\begin{equation} \label{may221}
\Prob\{\tau_\epsilon< \infty\} =
c_* G(z) \epsilon^{2-d}
\bigl[1 + O\bigl([{\epsilon}/y
]^{1 + 1/({4a})} \bigr) \bigr].\vadjust{\goodbreak}
\end{equation}
In particular,
there exist finite constants $c $ and $c'$ such that for all $z$ and
all~$\epsilon$,
%
\begin{equation} \label{may222}
\Prob\{\tau_\epsilon< \infty\} \leq c [\epsilon/\Im(z)]^{2-d}
\end{equation}
and
%
\begin{equation} \label{may263}
\Prob\{\tau_\epsilon< \infty\} \geq
c' G(z) \epsilon^{2-d}.
\end{equation}
\end{proposition}

\subsection{\texorpdfstring{Two-sided chordal $\mathit{SLE}_\kappa$}
{Two-sided chordal SLE kappa}}\label{twochordal}

When $z = x+iy$ with $|x| \gg y$, then the radial parametrization
is not so useful, because the path can get close to $z$ without
significantly decreasing the radial parametrization. Fortunately,
one can study such processes by studying another process which
we call \textit{two-sided chordal $\mathit{SLE}_\kappa$ through~$x$}.
It is $\mathit{SLE}_\kappa$ weighted locally by $X_t^{1-4a}$ (as
compared to $|Z_t|^{1-4a}$ for two-sided radial).

Let us fix $z=x+iy$ and for convenience we assume
$x > 0$. We write $\tilde X_t = Z_t(x) = g_t(x) - U_t$
which satisfies
\[
d\tilde X_t =\frac{a}{\tilde
X_t} \, dt + dB_t.
\]
It\^o's formula shows that
\[
d \tilde X_t^{1-4a} = X_t^{1-4a}
\biggl[\frac{a(4a-1)}{X_t^2} \,dt
+ \frac{1-4a}{X_t} \, dB_t
\biggr]
\]
and hence
\[
N_t = \tilde
X_t^{1-4a} \exp\biggl\{-a(4a-1)\int_0^t
\frac{ds}{\tilde X_s^2} \biggr\}
= \tilde X_t^{1-4a} g_t'(x)^{4a-1}
\]
is a local martingale satisfying
\[
dN_t = \frac{1-4a}{\tilde X_t} M_t
\, dB_t.
\]
Hence,
\[
dB_t = \frac{1-4a}{\tilde X_t} \, dt +
d\tilde W_t,
\]
where $\tilde W$ is a Brownian motion in the weighted measure.
In particular,
\[
d \tilde X_t = \frac{1-3a}{\tilde X_t} \, dt
+ d\tilde W_t.
\]
From this, we see that the terminal time $T_x$ is finite
with probability one in the weighted measure.
Proposition~\ref{jun8prop1}
shows that $\gamma(T_x) = x$.
(The proof is similar to the proof that $\mathit{SLE}_\kappa$ hits
points for $\kappa\geq8$.) We precede this with a
standard lemma
about Bessel process whose proof we omit.
%
\begin{lemma} \label{jun8lemma}
Suppose $r < 1/2$ and $X_t$ satisfies the
Bessel equation
\[
dX_t = \frac{r}{X_t} \, dt + d B_t,\qquad
X_0 = 1.\vadjust{\goodbreak}
\]
Let $T = \inf\{t\dvtx X_t = 0\}$. Then with probability one,
\[
T < \infty,\qquad
\int_0^T \frac{dt}{X_t} < \infty,\qquad
\int_0^T \frac{dt}{X_t^2} = \infty.
\]
Moreover, for every $\delta> 0$,
%
\begin{equation} \label{jun8111}
\Prob\biggl\{ 0 \leq X_t \leq1 + \delta, 0 \leq t \leq\delta;
T \leq\delta; \int_0^T \frac{dt}{X_t} \leq
\delta\biggr\}
> 0.
\end{equation}
\end{lemma}

If $B_0 = 0$, then
\[
-B_t = X_0 - X_t + \int_0^t \frac{r}{X_s} \, ds
= 1 - X_t + \int_0^t \frac{r}{X_s} \, ds
\]
and hence on the event described in (\ref{jun8111})
\[
- \delta\leq- B_t \leq1 + \delta,\qquad
0 \leq t \leq T.
\]

\begin{proposition} \label{jun8prop1} Suppose $0 < x < x_1$ and let
\mbox{$R_t = Z_t(x_1) = g_t(x_1) - U_t$}. Then with probability one,
if $\gamma$ is two-sided chordal to $x$ with terminal
time $T = T_x$,
\[
R_{T} > 0.
\]
\end{proposition}
\begin{pf} Since $1-3a < \frac12$, Lemma
\ref{jun8lemma} implies that
with probability one $T < \infty$ and
%
\begin{eqnarray}
\label{jun41}
\int_0^T \frac{dt}{\tilde X_t} &<& \infty,
\\
%
\label{jun42}
\int_0^T \frac{dt}{\tilde X_t^2} &=& \infty.
\end{eqnarray}

Let $q(x,y)$ denote the probability that $R_T > 0$ given
$\tilde X_0 = x, R_0 = y$. By Bessel scaling, $q(x,y) =
q(1,y/x)$.
We claim that $q(1,r) \rightarrow1$ as $r \rightarrow
\infty$. Indeed, we have
\begin{eqnarray*}
\log R_T &=& \log[R_T - \tilde X_T] =
\log(y-x) + \int_0^T \p_t[ \log(R_t -
\tilde X_t)] \, dt\\
&=&\log(y-x) - \int_0^T \frac{a \, dt}{\tilde X_t R_t}.
\end{eqnarray*}
From this and (\ref{jun41}), we
can deduce this fact. From this and the strong
Markov property, it follows that on the event
%
\begin{equation} \label{jun43}
\sup_{0 \leq t < T} \frac{R_t -
\tilde X_t}{\tilde X_t} =
\infty,
\end{equation}
we must have $R_T > 0$.\vadjust{\goodbreak}

If
\[
L_t = \log\frac{R_t - \tilde X_t}{\tilde X_t},
\]
then
\[
dL_t = \biggl[
- \frac{a}{\tilde X_t R_t}
-\frac{ 1/2-3a}{\tilde X_t^2}\biggr] \, dt
- \frac1{\tilde X_t} \, d\tilde W_t.
\]
Under a suitable time change $\hat L_t = L_{\sigma(t)}$,
this can be written as
\[
d\hat L_t = \biggl[
- \frac{a \hat X_t}{\hat R_t} - \biggl(\frac
12 - 3a\biggr)\biggr] \, dt
+ d\hat B_t
\]
for a standard Brownian motion $\hat B_t$. By
(\ref{jun42}), in this time change it takes infinite time
for $\hat X_t$ to reach zero, that is, $\sigma(\infty) = T$.
Since $\hat X_t \leq\hat R_t$, the drift term
is bounded below by
\[
-a -\bigl(\tfrac12 - 3a\bigr) > 0
\]
and hence $\hat L_t \rightarrow\infty$ as $t \rightarrow\infty$.
In particular, (\ref{jun43}) holds.
\end{pf}

If $0 < x < x_1 < x_2$, then the random variable
\[
\Delta=\Delta(x,x_1,x_2) =
\max_{0 \leq t \leq T_x} \frac{Z_{t}(x_2)}
{Z_{t}(x_1) }
\]
is well defined and satisfies $1 < \Delta< \infty$.
Moreover, Bessel scaling implies that the distribution
of $\Delta(x,x_1,x_2)$ is the same as that
of $\Delta(rx,rx_1,rx_2)$. The next lemma
gives uniform bounds on the distribution in
terms of $(x_2-x)/(x_1-x)$.
%
\begin{lemma} For every $\rho> 0$ there exists
$C < \infty$ such that the following is true. Suppose
$x < x_1 < x_2$ with $x_1 - x \geq\rho(x_2-x)$.
Then with probability at least $1-\rho$,
$\Delta(x,x_1,x_2) \leq C$.
\end{lemma}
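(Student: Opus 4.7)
The plan is to exploit the fact that $\tilde X_t := Z_t(x)$, $R_t := Z_t(x_1)$, and $Q_t := Z_t(x_2)$ are all driven by the same Brownian motion, so the differences $U_t := Q_t - R_t$ and $V_t := R_t - \tilde X_t$ evolve deterministically via
\[
\frac{dU_t}{dt} = -\frac{a U_t}{Q_t R_t}, \qquad \frac{dV_t}{dt} = -\frac{a V_t}{R_t \tilde X_t}.
\]
A brief computation gives $\frac{d}{dt}\log(U_t/V_t) = a(Q_t - \tilde X_t)/(\tilde X_t Q_t R_t) \geq 0$, so $U_t/V_t$ is nondecreasing. Combining this with $R_t \geq V_t$ and $V_T = R_T$ (because $\tilde X_T = 0$),
\[
\frac{Q_t}{R_t} = 1 + \frac{U_t}{R_t} \leq 1 + \frac{U_t}{V_t} \leq 1 + \frac{U_T}{V_T} = \frac{Q_T}{R_T},
\]
so $\Delta = Q_T/R_T$, attained at the terminal time.

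Integrating the ODEs for $U$ and $V$ on $[0,T]$ (legitimate by Proposition \ref{jun8.prop1}, which ensures the relevant integrals are a.s.\ finite), one obtains $R_T = (x_1-x)\exp(-aI^{(1)})$ and $Q_T = (x_2-x)\exp(-aI^{(2)})$, where $I^{(i)} = \int_0^T dt/(\tilde X_t Z_t(x_i))$. Therefore
\[
\Delta = \frac{x_2 - x}{x_1 - x}\,\exp\!\left(a \int_0^T \frac{Q_t - R_t}{\tilde X_t\, Q_t\, R_t}\,dt\right).
\]
The hypothesis $x_1 - x \geq \rho(x_2 - x)$ bounds the prefactor by $1/\rho$, so the task reduces to proving that the random integral $J := \int_0^T (Q_t - R_t)/(\tilde X_t Q_t R_t)\,dt$ is tight in probability, uniformly over admissible triples $(x, x_1, x_2)$.

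By Bessel scaling the joint law of $(\tilde X, R, Q)$ depends only on ratios of initial positions, so I normalize $x_1 - x = 1$ and parametrize by $\mu := x \in (0, \infty)$ and $\nu := x_2 - x \in (1, 1/\rho]$. For fixed $\mu$, Proposition \ref{jun8.prop1} gives $J < \infty$ a.s., and continuous dependence on initial data handles compact ranges of $\mu$. For $\mu \to 0^+$, Bessel scaling of $\tilde X$ shows $\int_0^T dt/\tilde X_t \stackrel{d}{=} \mu\, J'$ for an a.s.\ finite $\mu$-independent $J'$, while $Q_t - R_t \leq \nu - 1$ and $Q_t R_t = O(1)$, yielding $J = O(\mu)$. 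For $\mu \to \infty$, a further rescaling by $1/\mu$ moves the initial triple to $(1, 1 + 1/\mu, 1 + \nu/\mu)$; $J$ is scale invariant, and in these coordinates $Q_t - R_t \leq (\nu-1)/\mu$, giving $J = O(1/\mu)$. The main obstacle is making this uniformity rigorous: the integrand is singular at $\tilde X_t = 0$, and although $R_T$ and $Q_T$ both degenerate to $0$ as $\mu \to \infty$, one must show they do so at comparable rates. This requires careful Bessel-process estimates in the spirit of Lemma \ref{jun8.lemma} together with analysis of the weighted coupled SDE near the terminal time.
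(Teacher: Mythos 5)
Your opening observation is correct and elegant: since $U_t = Q_t - R_t$ and $V_t = R_t - \tilde X_t$ solve deterministic ODEs driven by the same paths $\tilde X, R, Q$, the ratio $U_t/V_t$ is nondecreasing; combining this with $R_t \geq V_t$ and $\tilde X_T = 0$ shows that the supremum $\Delta = Q_T/R_T$ is attained at the terminal time and factors as $\Delta = \frac{x_2 - x}{x_1 - x}\exp(aJ)$ with $J = \int_0^T (Q_t - R_t)/(\tilde X_t Q_t R_t)\,dt$. That cleanly reduces the lemma to a uniform tightness statement for $J$. But this reduction is where your proof stops: uniform tightness of $J$ over all admissible triples \emph{is} the lemma after your reformulation, and you explicitly flag that you have not established it. The sketch of the three regimes in $\mu = x/(x_1 - x)$ is not a proof, and the $\mu \to \infty$ regime in particular looks circular: after rescaling so that $\tilde X_0 = 1$, you have $R_0, Q_0 \to 1$, hence $R_T, Q_T \to 0$, so the factor $Q_t R_t$ in the denominator of the integrand is not bounded away from zero near $T$; controlling $J$ there amounts to controlling the ratio $Q_T/R_T$, which is exactly $\Delta$ again. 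Your heuristic $J = O(1/\mu)$ presumes the answer.

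The paper proceeds quite differently and never integrates the ODE. After scaling to $x = 1$ and writing $x_2 = 1+s$, it splits on the size of $s$. For large or moderate $s$ it uses, respectively, that the hull and driving function stay in a bounded region with high probability (so $Z_t(\tilde x) \asymp \tilde x$ for $\tilde x$ large) and monotonicity of $\Delta$ in its arguments. For small $s$, the key device is the stopping time $\xi = \inf\{t : |\gamma(t) - 1| = 2s\}$: before $\xi$ the curve is at distance comparable to $s$ from $[x, x_2]$, so the Koebe distortion theorem gives $g_t'(x) \asymp g_t'(x_1) \asymp g_t'(x_2)$ uniformly, which bounds $\sup_{t \leq \xi} Q_t/R_t$; and at time $\xi$ the $1/4$-theorem shows $Z_\xi(x) \asymp Z_\xi(x_1) - Z_\xi(x) \asymp Z_\xi(x_2) - Z_\xi(x)$, so the strong Markov property reduces the tail $\sup_{\xi \leq t \leq T}$ to the already-handled case of comparable initial data. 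Your ODE identity could plausibly be combined with a similar stopping-time decomposition, but as written the hard part of the lemma --- the uniformity in the initial data, especially as the three points coalesce --- is entirely deferred.
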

\begin{pf} We fix $\rho$ and allow constants to depend
on $\rho$. By scaling and monotonicity, we may assume
$x=1$ and $x_1 - x = \rho(x_2-x)$. Hence, we write
$x_1 = 1 + s\rho, x_2 = 1 + s$ where $s > 0$ (our estimates
must be uniform over $s$). If $s$ is very large,
then since $T_x < \infty$, with probability one, one can
find a $K$ such that with probability at least $1-\rho$
for all $\tilde x \geq K\rho$.
\[
\frac{Z_0(\tilde x)}{2}
\leq Z_t(\tilde x) \leq2 Z_0(\tilde
x),\qquad 0 \leq t \leq T_x,
\]
and hence with probability at least $1-\rho$,
$\Delta(1,1+s\rho,1 + s) \leq4$. For
$1/2 \leq s \leq K$, we can bound
\[
\Delta(1,1+s\rho,1+s) \leq
\Delta\biggl(1,1 + \frac\rho2,
1+ K\biggr).
\]

For the remainder, we assume that $s \leq1/2$.
Let
\[
\xi= \inf\{t\dvtx|\gamma(t) - 1| = 2s\}.
\]
By distortion estimates, we see that
\[
g_t'(x) \asymp g_t'(x_1) \asymp
g_t'(x_2),\qquad 0 \leq t \leq\xi,
\]
with the implicit constants uniform over $s$. Since
$g_t'(x')$ is an increasing function of $x'$, this implies
\[
Z_t(x_1) \asymp Z_t(x_2),\qquad
0 \leq t \leq\xi,
\]
which gives uniform estimates (with probability
one) on
\[
\max_{0 \leq t \leq\xi} \frac{Z_{t}(x_2)}
{Z_{t}(x_1) }.
\]
Also,
using the $1/4$-theorem with distortion estimates,
\[
Z_{\xi}(x) \asymp Z_{\xi}(x_1) - Z_\xi(x)
\asymp Z_{\xi}(x_2) - Z_\xi(x)
\]
with again the estimates uniform over $s$. The conditional
distribution of
\[
\max_{\xi\leq t \leq T_x} \frac{Z_{t}(x_2)}
{Z_{t}(x_1) }
\]
given $\F_\xi$ is the distribution of $\Delta(Z_\xi(x),
\Z_\xi(x_1), Z_\xi(x_2))$.
Hence, we reduce this to the case where $x,x_1,x_2$ are comparable
which we have already stated and handled.
\end{pf}
%
\begin{lemma} \label{jun8lem3}
For every $\rho> 0$, there is
a $u > 0$ such that the following holds. Suppose
$x > 0$ and $\gamma$ is two-sided
chordal $\mathit{SLE}_\kappa$
to $x$. Let $ 0< \epsilon
\leq x$ and
\[
\xi= \inf\{t\dvtx |\gamma(t) - x| = \epsilon\}.
\]
Define $\psi$ by
\[
\gamma(\xi) = x + \epsilon e^{i\psi}.
\]
Then with probability at least $1-\rho$,
$\psi\geq u$.
\end{lemma}
\begin{pf} As $\psi\to0$, the extremal distance between
$(x,x+\epsilon/2)$ and $(x+2\epsilon,\infty)$
in $H_{\xi}$ tends to $\infty$. By conformal invariance of extremal
distance, if $\psi$ is very close to zero, then we can
see that $Z_{\xi}(x+2 \epsilon) - Z_\xi(x) \gg
Z_\xi(x + \frac\epsilon
2) - Z_\xi(x)$. The previous lemma bounds the probability that
this happens.
\end{pf}

\subsubsection{Comparison with two-sided radial}

In this section, we fix $ x > 0$
$\epsilon< x/2$. If $z \in\Half$,
we write
$Z_t = Z_t(z)= X_t + iY_t$, $\tilde X_t = Z_t(x)$ as above.
Recall that two-sided radial to $z$ is the process obtained
by weighting with respect to the local martingale
\[
M_t = |Z_t|^{1-4a} (\Upsilon_t/y)^{1/({4a}) - 1}
(Y_t/y)^{4a-1}.\vadjust{\goodbreak}
\]
Here we have included a constant $y^{2-4a-1/({4a})}$ to the usual local
martingale, but this does not affect the weighting. Under this
choice of $M_t$, $M_0 = |z|^{1-4a}$. Two-sided chordal
is obtained by weighting by the local martingale
\[
N_t = |\tilde X_t|^{1-4a} g_t'(x)^{4a-1}.
\]

Let
\[
\sigma= \inf\{t\dvtx|\gamma(t) - x| = 2 \epsilon\}
.
\]
Distortion estimates and the $1/4$-theorem imply
that for $0 \leq t \leq\sigma$ and $|z-x| \leq\epsilon$,
\[
g_t'(x) \asymp|g_t'(z)|,\qquad
\Upsilon_t \asymp\Upsilon_0 = y,\qquad
Y_t \asymp y |g_t'(z)|,\qquad
\tilde X_t \asymp|Z_t|
\]
and hence
\[
M_t \asymp N_t,\qquad
0 \leq t \leq\sigma,
\]
where the implicit constants are uniform over $z$.
We have just proved the following.
%
\begin{proposition} \label{rn}
There exists $c < \infty$, such that
for every $z=x+iy$ with the property $|z-x|\leq\epsilon$, if
$\mu$ denotes the measure on paths
\[
\gamma(t),\qquad 0 \leq t \leq\sigma,
\]
given by two-sided radial $\mathit{SLE}_\kappa$ to $z$ and
$\nu$ denotes the analogous measure on paths using two-sided
chordal to $x$, then
\[
c^{-1} \leq\frac{d\mu}{d\nu} \leq c.
\]
\end{proposition}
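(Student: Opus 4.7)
The plan is to recognize $\mu$ and $\nu$ as Girsanov reweightings of the underlying chordal $SLE_\kappa$ measure $\Prob$ by the positive martingales $M_{\cdot\wedge\sigma}$ and $N_{\cdot\wedge\sigma}$. Since $\sigma$ keeps the curve at Euclidean distance at least $2\epsilon$ from $x$ (and hence from $z$, given $|z-x|\le\epsilon$), both stopped processes are bounded and hence true martingales, so
\[
\frac{d\mu}{d\nu}\bigg|_{\F_\sigma} \;=\; \frac{M_\sigma/M_0}{N_\sigma/N_0} \;=\; \frac{N_0}{M_0}\cdot\frac{M_\sigma}{N_\sigma}.
\]
The constant factor $N_0/M_0 = x^{1-4a}\,|z|^{4a-1}$ is bounded above and below uniformly because $|z-x|\le\epsilon<x/2$ forces $|z|\asymp x$. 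The proposition therefore reduces to the uniform comparison $M_t\asymp N_t$ on $0\le t\le\sigma$.

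For this comparison I would use the explicit forms
\[
M_t = |Z_t(z)|^{1-4a}\,(\Upsilon_t/y)^{1/(4a)-1}\,(Y_t/y)^{4a-1}, \qquad N_t = \tilde X_t^{1-4a}\,g_t'(x)^{4a-1},
\]
and establish four running asymptotics on $\{t\le\sigma\}$: (i) $g_t'(x)\asymp|g_t'(z)|$; (ii) $\Upsilon_t\asymp y$; (iii) $Y_t\asymp y\,|g_t'(z)|$; (iv) $|Z_t(z)|\asymp \tilde X_t$. The key geometric input is that the hull $K_t$ is disjoint from $B(x,2\epsilon)\supset B(z,\epsilon)$, so $g_t$ is univalent on a half-disk of radius $2\epsilon$ about $x$. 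Pulling back via $w\mapsto x+2\epsilon w$ and applying the Koebe distortion theorem on $\Disk$ gives (i) with constants depending only on the fixed ratio $1/2$ of the nested radii; the Koebe $1/4$ theorem, combined with $\Upsilon_t\asymp_2 \dist(z,\R\cup\gamma(0,t])$ and $\dist(z,\R)=y$, yields (ii); (iii) is $\Upsilon_t=Y_t/|g_t'(z)|$ combined with (i) and (ii); and (iv) follows from $|Z_t(z)-\tilde X_t|\lesssim y\,|g_t'(x)|$ (by integrating $g_t'$ along the segment from $x$ to $z$ and using (i)) together with the lower bound $\tilde X_t\gtrsim \epsilon\,g_t'(x)\gtrsim y\,g_t'(x)$ supplied by the $1/4$ theorem applied to $g_t^{-1}$. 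Substituting (i)--(iv) into the formulas for $M_t$ and $N_t$ produces $M_t\asymp N_t$ by direct inspection of the power-law factors.

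The only real technical subtlety is bookkeeping the uniformity: all implicit constants must be independent of $z$, $x$, $\epsilon$, and the sample path. Pulling $g_t$ back to a standard disk is what achieves this, since then the distortion constants depend only on the universal ratio of the two concentric disks; once that is set up, the exponent arithmetic matching $M_t$ to $N_t$ is routine.
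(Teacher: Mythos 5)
Your proposal is correct and is essentially the paper's proof, just written out in more detail. The paper records exactly the four running comparabilities you list — $g_t'(x)\asymp|g_t'(z)|$, $\Upsilon_t\asymp y$, $Y_t\asymp y\,|g_t'(z)|$, $\tilde X_t\asymp|Z_t|$ — attributing them to distortion estimates and the $1/4$-theorem on the half-disk about $x$ avoided by the hull up to time $\sigma$, and then concludes $M_t\asymp N_t$ on $[0,\sigma]$; the Radon--Nikodym reformulation is the same, with the bounded initial ratio handled exactly as you indicate.
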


The following is a corollary of this proposition and
Lemma~\ref{jun8lem3}.
%
\begin{lemma} \label{jun8lem4}
For every $\rho> 0$, there is a $u > 0$ such that the following
is true. Suppose $x > 0$ and $\delta< x$. Suppose
$z \in\Half$ with $|z-x| \leq\delta/2$. Let $\gamma$ be a two-sided
radial $\mathit{SLE}_\kappa$ path through $z$, and let
\[
\sigma= \inf\{t\dvtx|\gamma(t) - x | = \delta\}.
\]
Then,
\[
\Prob^*_z\{ S_\xi(z) \geq u\} \geq1 - \rho.
\]
\end{lemma}

\section{\texorpdfstring{Proof of (\protect\ref{may274})}
{Proof of (14)}} \label{Section3}
In Section~\ref{importantsec}, we spend a relatively long
time deriving an estimate that states roughly that there is a
positive probability that a two-sided radial path stays
with a small distance of an ``$L$''-shape.
The proof uses
two well-known ideas:
\begin{itemize}
\item If $f\dvtx[0,1] \rightarrow\R$ is a continuous
function with $f(0) = 0$ and $\epsilon> 0$, and
$U_t$ is a standard Brownian motion, then the
probability that $\|f - U\|_\infty< \epsilon$
is positive, where $\| \cdot\|_\infty$ denotes the supremum norm.\vadjust{\goodbreak}
\item If the driving functions (using the Loewner equation)
of two curves
are close in the supremum norm, then the curves are close
in the Hausdorff metric. This follows in a straightforward manner
from the Loewner equation. (It is possible that the curves are
not close in the supremum norm on curves, but this is not
important for us.)
\end{itemize}
It takes a little time to write out the details because our driving
functions are those for two-sided radial and we need some
uniformity in the estimates. Readers who are convinced
that this can be done can skip the proof of Proposition
\ref{hullprop}. Proposition~\ref{determine} is a simple deterministic
estimate which uses Proposition~\ref{hullprop} to establish
(\ref{may274}) for $z,w$ far apart.

It remains to prove (\ref{may274}) for $z,w$ close and
this is the goal of Section~\ref{mainestimate}. Indeed, this
does not seem that it should be difficult, since the events
of getting close to $z$ and getting close to $w$ should be
very positively correlated.
We give the argument in two
cases: when $|z-w|$ is much smaller than $\Im(z)$ and when
$z,w$ are close to each other and also close to the boundary.

\subsection{Probability of $L$-shapes}
\label{importantsec}

If $z = x+iy$, let $L_z$ denote the ``$L$''-shape
\begin{eqnarray*}
L_z &=& [0,x] \cup[x,x+iy],\qquad x \geq0,
\\
L_z &=& [x,0] \cup[x,x+iy],\qquad x \leq0,
\end{eqnarray*}
and if $\rho>0$,
\[
L_{z,\rho} = \{z' \in\overline\Half\dvtx \dist
(z',L_z) \leq\rho|z|\}.
\]
The goal of this section is to prove the following proposition.
%
\begin{proposition} \label{hullprop}
For every $\rho> 0$, there is a $u > 0$ such that
for all $z = x+iy \in\Half$,
\[
\Prob^*_z\{
\gamma[0,T_z] \subset L_{z,\rho}\}
\geq u.
\]
\end{proposition}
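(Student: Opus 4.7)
The proof combines the two ingredients flagged just before the statement: the Stroock--Varadhan support theorem for Brownian motion, and quantitative continuity of the Loewner map from driving function (in sup norm) to generated hull (in Hausdorff metric). The main technical work is to carry this through for the drift-perturbed driving function of two-sided radial and to obtain estimates uniform in the target point $z$.

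By the scale invariance of $\Prob^*_z$ and of the family $L_{z,\rho}$, it suffices to treat $|z|=1$, and by reflection we may assume $x=\Re z\geq 0$. The plan is to construct for each such $z$ a deterministic continuous function $f_z:[0,t^*]\to\R$ with $f_z(0)=0$ whose chordal Loewner curve $\eta_z$ stays inside $L_{z,\rho/4}$ and has tip within distance $\rho/16$ of $z$ at time $t^*$; a natural candidate is a smoothed version of the $L$-shape itself, parametrized slowly enough that the driving function is continuous. Quantitative Loewner-continuity supplies $\delta=\delta(\rho,z)>0$ such that on $\{\|U-f_z\|_{\infty,[0,t^*]}<\delta\}$ the random curve satisfies $\gamma[0,t^*]\subset L_{z,\rho/2}$ and $\tau_{\rho/8}(z)\leq t^*$. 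Therefore
\[
A_z \;=\; \bigl\{\tau_{\rho/8}(z)\leq t^*\bigr\}\cap\bigl\{\gamma[0,\tau_{\rho/8}]\subset L_{z,\rho/2}\bigr\}
\]
lies in $\F_{\tau_{\rho/8}}$ and has $\Prob(A_z)>0$ by the support theorem. Since $M_{\tau_{\rho/8}}>0$ on $\{\tau_{\rho/8}<\infty\}\supset A_z$, the Radon--Nikodym identity $\Prob^*_z(A_z)=G(z)^{-1}\,\E[M_{\tau_{\rho/8}}\,\mathbf 1_{A_z}]$ yields $\Prob^*_z(A_z)>0$.

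Next I would extend control from $\tau_{\rho/8}$ to $T_z$. In the radial parametrization of Section~\ref{confsec} the angular process $\hat\theta_t$ is positive-recurrent on $(0,\pi)$ and $\hat\Upsilon_t=e^{-2at}$, so between consecutive crossings of $\hat\theta_t=\pi/2$ the Euclidean radius of the curve around $z$ is comparable to $\hat\Upsilon_t$ while the radial-time increments are $O(1)$. Summing the resulting geometric series gives almost-surely finite tail-displacement, and an explicit tightness estimate (combining Lemma~\ref{lemma.may22} with the Koebe distortion theorem) produces $C_0<\infty$ such that for every starting configuration arising inside $A_z$,
\[
\Prob^*_z\bigl\{\gamma[\tau_{\rho/8},T_z]\subset B(z,\,C_0\rho/8)\,\bigm|\,\F_{\tau_{\rho/8}}\bigr\}\;\geq\;\tfrac12.
\]
Replacing the cutoff $\rho/8$ by $\rho/(8C_0)$ throughout the preceding support argument then forces the whole curve into $L_{z,\rho}$ with positive $\Prob^*_z$-probability.

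The main obstacle is uniformity of these estimates in $z$: as $\arg z\to 0$ the $L$-shape degenerates and the radial drift $(1-4a)X_t/|Z_t|^2$ grows unboundedly, so neither the construction of $f_z$ nor the Loewner-continuity modulus extends by simple compactness from the bulk regime. For this I would exploit Proposition~\ref{rn}: up to the first entry into $B(x,2\rho|z|)$, the laws of two-sided radial to $z=x+iy$ and of two-sided chordal to $x$ are mutually absolutely continuous with bounded Radon--Nikodym derivatives, and the chordal driving SDE is a clean Bessel-type equation whose support is directly controlled. Lemma~\ref{jun8.lem4} supplies the uniform angular control at the moment of entry into $B(x,2\rho|z|)$, after which the remaining configuration is in the bulk regime at its own scale and is handled by the earlier argument. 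The bulk regime $\arg z\geq\theta_0(\rho)$ is itself uniform by compactness of the closed arc together with continuous dependence of the support-theorem and Loewner-continuity constants on the target $z$.
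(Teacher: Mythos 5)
Your proposal follows essentially the same route as the paper's proof: reduction via the deterministic Lemma~\ref{newdetermine} to the driving-function event of Lemma~\ref{partytime}, a Cameron--Martin/support-theorem estimate for the driving function under the unweighted measure, transfer to $\Prob_z^*$ via a pointwise lower bound on the local martingale $M_\cdot(z)$, the two-sided chordal comparison near $\R$ via Proposition~\ref{rn} and Lemma~\ref{jun8.lem4}, and the uniform escape estimate of Lemma~\ref{anotherlemma} to control the tail after the curve gets close. The one ingredient you leave implicit that the paper makes explicit is the uniform lower bound $S(z)\geq c$ at the approach time, needed both to bound the Radon--Nikodym factor from below and to make the tail estimate uniform over configurations in $A_z$; the paper secures it by choosing the stopping time $\eta$ so that the tip lands on the vertical leg of the $L$-shape and then running a harmonic-measure / Beurling argument, and your construction of $f_z$ would need to guarantee the same.
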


The important thing is
to show that we can choose $u$ uniformly over $z$.
Before discussing the proof of this proposition, let us
show how it can be used to prove (\ref{may274}) for $z$
and $w$ sufficiently spread apart.
We use the following
deterministic estimate.
%
\begin{proposition} \label{determine}
For every $0 < \rho\leq1/4$ there exists
$c < \infty$ such that the following holds. Suppose
$z\in\Half$, $w \in\Half
\setminus L_{z,2\rho}$, and suppose
$\gamma\dvtx[0,T] \rightarrow\overline\Half$ is a curve
with $\gamma(0)=0, \gamma(T) = z$ and $\gamma[0,T]
\subset L_{z,\rho}$.
Let $g = g_T$ be the corresponding
conformal transformation and let $Z = g(w) - g(z)$. Then
\begin{eqnarray*}
G(Z) &\geq& c G(w),
\\
c^{-1} &\leq&|g'(w)| \leq c.
\end{eqnarray*}
\end{proposition}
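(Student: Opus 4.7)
My plan is to prove this deterministic estimate by combining scaling invariance, a large-$|w|$ expansion for the conformal map, and a compactness argument for bounded $|w|$.

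First, both inequalities are invariant under the rescaling $\zeta \mapsto r\zeta$: since $G$ is $(d-2)$-homogeneous, $G(Z)/G(w)$ is scale-invariant, and $g'(w)$ is dimensionless under the corresponding rescaling of the Loewner chain. I therefore assume $|z|=1$, whence $K_T \subset L_{z,\rho}$ has diameter, half-plane capacity $\hcap(K_T)$, and Loewner time $T$ bounded by constants depending only on $\rho$, and the driving function satisfies $|U_t|\le C(\rho)$ for $t\le T$.

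For $|w|$ above a threshold $R=R(\rho)$, the plan is to apply the standard hydrodynamic expansion
\[
g(w) = w + \frac{\hcap(K_T)}{w} + O(|w|^{-2}),
\qquad
g'(w) = 1 + O(|w|^{-2}),
\]
which immediately yields $|g'(w)|\asymp 1$, and, since $U_T$ is bounded, $\Im(Z)=\Im(w)(1+o(1))$ and $\arg Z = \arg w + o(1)$ as $|w|\to\infty$, hence $G(Z)/G(w)\to 1$. For $|w|\le R$ I argue by compactness: the parameter space of triples $(z,K_T,w)$ with $z$ on the unit upper semicircle, $K_T$ a compact hull contained in $L_{z,\rho}$ having $z$ as a prime end, and $w\in\overline{\Half}\setminus L_{z,2\rho}$ with $|w|\le R$, is compact in the Hausdorff/Carath\'eodory topology. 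Carath\'eodory kernel convergence gives continuity of $g$ and $g'$ with respect to the hull, and at boundary points $w\in\R\setminus L_z$ the map $g$ extends analytically via Schwarz reflection across $\R\setminus K_T$ (legitimate since $w$ is separated from $K_T$). Hence $|g'(w)|$ and $G(Z)/G(w)$ are continuous strictly positive functions on a compact set, so they attain positive lower and finite upper bounds; combined with the large-$|w|$ regime this gives the proposition.

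The main obstacle will be verifying uniformity as $\Im(w)\to 0$ with $w\not\in L_z$: both $G(w)$ and $G(Z)$ blow up like $\Im^{d-2}$, so the ratio is a $0/0$ limit that must be tracked carefully. Schwarz reflection makes this precise: to leading order $\Im(Z)\sim |g'(w)|\,\Im(w)$ and $\sin\arg Z \sim |g'(w)|\,\Im(w)/|Z|$, so both the $\Im^{d-2}$ factors and the $\sin^{4a-1}$ factors in $G(Z)/G(w)$ have matching leading-order vanishing, leaving a positive finite limit determined by $|g'(w)|$, $|w|$, and $|Z|$. Once this continuity across the real boundary is established, the compactness conclusion is immediate.
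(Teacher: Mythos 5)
Your proof is essentially correct but follows a genuinely different route from the paper's. The paper's argument is short and explicit: after scaling to $|z|=1$ and disposing of $|w|\ge 2$, it bounds $\Im[g(w)]$ from below by a harmonic-measure argument (a Brownian motion from $w$ must reach a fixed faraway set $J$ before hitting $\R\cup L_{z,\rho}$, a probability $\gtrsim \Im(w)$), then invokes \eqref{jun9.1} to control $S_{H_T}(w)/S_0(w)$, and the Koebe $1/4$-theorem with $\dist(w,\R\cup L_{z,\rho})\asymp\Im(w)$ to control $|g'(w)|$. This delivers quantitative bounds directly, including the $\Im(w)\to 0$ regime, with no limiting argument. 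Your approach replaces this with a hydrodynamic expansion for $|w|$ large and a compactness/continuity argument for $|w|$ bounded, using Schwarz reflection to resolve the $0/0$ ratio as $\Im(w)\to 0$. This is heavier machinery but conceptually clean, and it does correctly locate the main danger point (the boundary degeneration of $G$). One detail you should make explicit: Carath\'eodory convergence of hulls does not preserve the property of being a curve-hull with tip $z$, so the set of hulls "having $z$ as a prime end" is not obviously closed. The fix is to observe that the only curve data entering the estimate is the pair $(K_T,\,U_T)$, where $U_T=g_T(z)$, and $U_T$ ranges over a compact interval determined by $L_{z,\rho}$; so one can compactify in the hull $K$ and in a free real parameter $U$ independently, prove the bounds for all $(K,w,U)$ in that larger compact set, and then specialize. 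With that adjustment the argument goes through. Net trade-off: the paper's harmonic-measure argument is more elementary and gives slightly more information (explicit comparability of $\Im[g(w)]$ to $\Im(w)$); yours is more automatic once compactness and the Schwarz-reflection bookkeeping are set up, and generalizes easily to other comparable functionals of $g$.
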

\begin{pf} By scaling it suffices to consider
$|z| = 1$ which we assume. The estimate is easy for $|w| \geq2$
so we assume\vadjust{\goodbreak} $w \in\Half
\setminus L_{z,2\rho}$ with $|w| \leq2$. We fix $\rho$ and allow
(implicit or explicit)
constants to depend on $\rho$.
Let $ J = \{w'\dvtx\Im(w') \geq3\}$.
Using conformal invariance,
one can see that $\Im[g(w)]$ is comparable to the probability
that a Brownian motion
starting at $w$ reaches $J$ without
hitting $\R\cup\gamma[0,T]$ and this is not smaller
than the the probability of reaching $J$ before hitting
$\R\cup L_{z,\rho}$. It is not difficult to see that this is
bounded below by a constant times $\Im(w)$ and hence
\[
c \Im[w] \leq\Im[g(w)] \leq\Im[w].
\]
Using (\ref{jun91}), we also see that
\[
S_{H_T}(w) \asymp\Im[g(w)] \asymp\Im[w]
\asymp S_0(w)
\]
(remember that the implicit constants depend on $\rho$).
Also, $\dist(w,\R\cup L_{z,\rho}) \asymp\Im(w)$, and hence
by the Koebe-$1/4$ theorem,
\[
|g'(w)| \asymp1.
\]
Therefore, $G(Z) \asymp G(w)$.
\end{pf}
%
\begin{corollary} \label{may28cor1}
For every $\rho> 0$, there
exists $c > 0$ such that if $z \in\Half$ and $
w \in\Half\setminus L_{z,\rho}$, then
$ F(z,w) \geq c. $
\end{corollary}
\begin{pf}
By Propositions~\ref{hullprop} and~\ref{determine}, there
exists $u = u(\rho) > 0$ such that
\[
\Prob_z^*\{G_{H_T}(w;z,\infty) \geq
u G(w) \} \geq u.
\]
Therefore,
\[
\E_z^*[G_{H_T}(w;z,\infty)]
\geq u^2 G(w).
\]
\upqed\end{pf}

The proof of Proposition~\ref{hullprop} combines a probabilistic
estimate for the driving function and a deterministic estimate
using the Loewner equation. Assume $z=x+iy, x\geq0, y > 0$.
Note that one obtains the hull
$[x,x+iy]$ by solving (\ref{loeweq}) with $U_t \equiv x,
0 \leq t \leq y^2/(2a)$.
For every $\delta> 0$, let $ E_{z,\delta}$ denote
the event
\[
E_{z,\delta}
= \biggl\{ T_z \leq\frac{y^2}{2a} + \delta;
-\delta\leq U_t \leq x + \delta, 0 \leq t \leq\delta;
|U_t - x| \leq\delta, \delta\leq t \leq T_z \biggr\}.
\]

\begin{lemma} \label{newdetermine}
For every $\rho> 0$, there exists $\delta> 0$
such that the following holds. Suppose
$z= x+iy$ with $0 \leq x \leq1, 0 < y \leq1$. Then on the event
$E_{z,\delta}$,
\[
\gamma[0,T_z] \subset L_{z,\rho}.
\]
\end{lemma}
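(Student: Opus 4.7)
The plan is to treat this lemma as a deterministic continuity statement for the Loewner equation. The shape $L_z = [0,x] \cup [x,x+iy]$ is generated by the jump driving function $\tilde U_t$ equal to $0$ at $t=0$ and to $x$ for $t \in (0, y^2/(2a)]$. The event $E_{z,\delta}$ says precisely that the actual driving function $U_t$ deviates from $\tilde U_t$ by at most $\delta$ on $[\delta, T_z]$, while on the short initial interval $[0,\delta]$ it is only required to lie in the bounded range $[-\delta, x+\delta]$. I would split the curve into $\gamma[0,\delta]$ and $\gamma[\delta, T_z]$ and handle each piece separately.

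For the initial piece I would apply the standard Loewner hull bound: a hull grown over time $\delta$ with driving function taking values in $[-\delta, x+\delta]$ is contained in a thin rectangle sitting over that real interval with imaginary width $O(\sqrt{a\delta})$. For $\delta$ small enough, this rectangle sits inside $L_{z,\rho}$, which contains a neighborhood of $[0,x]$. For the second piece I would apply $g_\delta$ to transport into the half plane, where the shifted driving $U_{\cdot + \delta}$ lies within $\delta$ of the constant $U_\delta$ (itself within $\delta$ of $x$) for a time of length at most $y^2/(2a) + \delta$. On this compact time window, a near-constant driving function produces a curve that is close in sup norm to a genuine vertical slit of length close to $y$; pulling back through $g_\delta^{-1}$ (which is close to the identity away from $K_\delta$ by distortion estimates, since $K_\delta$ is small) yields a curve close to the vertical segment $[x, x+iy]$.

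The main obstacle is achieving uniformity in $z$ across the unit square. Brownian scaling of the Loewner equation lets me reduce to the case $|z| = 1$ by rescaling space by $1/|z|$ and time by $1/|z|^2$, so that $z$ runs over the compact arc $\{e^{i\theta} : 0 \le \theta \le \pi/2\}$. On this compact set I would prove the claim by compactness and contradiction: if no single $\delta$ worked, there would be sequences $z_n \to z_\ast$ on the arc and driving functions $U^n$ satisfying $E_{z_n, \delta_n}$ with $\delta_n \downarrow 0$ but $\gamma^n \not\subset L_{z_n,\rho}$. Extracting a locally uniformly convergent subsequence of $U^n$ off of $0$ identifies the limit with $\tilde U_{z_\ast}$, and the Loewner stability from the previous paragraph forces $\gamma^n \to L_{z_\ast}$ in the Hausdorff metric, contradicting $\gamma^n \not\subset L_{z_n, \rho}$. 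This compactness step absorbs the routine quantitative estimates and produces the uniform $\delta(\rho)$.
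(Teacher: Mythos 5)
Your core steps---split the curve at $t=\delta$, bound $K_\delta$ by a thin box via the capacity bound $\hcap(K_\delta)=a\delta$, then compare the Loewner flow on $[\delta,T_z]$ to the flow $\tilde g_t$ driven by the constant $x$---are exactly those of the paper's (omitted) proof, which cites \cite[Proposition~4.47]{lawlerbook} for the comparison step. Two points deserve correction. First, you assert that near-constant driving "produces a curve that is close in sup norm to a genuine vertical slit"; the paper explicitly warns that this may fail. What Loewner stability gives is closeness of the hulls in the Hausdorff metric (really, the one-sided containment $K_t\subset(\tilde K_t)_\epsilon$ obtained from the estimate $|g_t(w)-\tilde g_t(w)|\leq\epsilon$ for $w$ away from $\tilde K_t$), not sup-norm closeness of the parametrized curves; fortunately the one-sided containment is all the lemma requires, since $\gamma[0,T_z]\subset K_{T_z}$. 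Second, the compactness-and-contradiction wrapper is unnecessary and imprecise. It is unnecessary because the estimate of Proposition~4.47 is already uniform in $z$ once one notes the uniform bounds $T_z\leq y^2/(2a)+\delta\leq 1/(2a)+\delta$ and $\|U\|_\infty\leq 1+\delta$. And as written it has minor gaps: the event $E_{z,\delta}$ uses the same $\delta$ as a time tolerance and a space tolerance, so it does not transform homogeneously under the rescaling $z\mapsto z/|z|$ (time scales as $|z|^{-2}$, space as $|z|^{-1}$); the rescaled $z$ ranges over the half-open arc $\{e^{i\theta}:\theta\in(0,\pi/2]\}$, so compactness requires a limiting statement at $\theta=0$; and the convergence $\gamma^n\to L_{z_*}$ you invoke is Carath\'eodory-type kernel convergence (the drivers $U^n$ converge to a jump function only locally uniformly away from $t=0$), so concluding that every limit point of $\gamma^n[0,T_n]$ lands in $L_{z_*}$ needs a short extra argument. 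None of these is fatal, but the direct quantitative route the paper takes avoids all of them.
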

\begin{pf} This is a straightforward deterministic estimate
using (\ref{loeweq}). One first shows that if $\delta$ is sufficiently
small, then $|g_\delta(w) - w| \leq\rho/100$ for $\dist(w,\break[0, x])
\geq\rho$. For\vspace*{1pt} $\delta\leq t \leq T_z$, we compare $g_t$ to the
corresponding function $\tilde g_t$
obtained with $\tilde U_t \equiv x$. These estimates are standard
(see, e.g.,~\cite{lawlerbook}, Proposition~4.47), and we omit
the details.\vadjust{\goodbreak}
\end{pf}

Therefore, Proposition~\ref{hullprop} reduces to the following
probabilistic estimate on the driving function.
%
\begin{lemma} \label{partytime}
For every $\delta> 0$, there exists $u > 0$
such that if $z= x+iy$ with $0 \leq x \leq1, 0 < y \leq1$,
then
\[
\Prob_z^*[E_{z,\delta}] > u.
\]
\end{lemma}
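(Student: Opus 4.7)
The plan is to change measure from the two-sided radial law $\Prob^*_z$ back to the unweighted SLE law $\Prob$ via the Radon--Nikodym identity
\[
\frac{d\Prob^*_z}{d\Prob}\Big|_{\F_\tau}=\frac{M_\tau(z)}{G(z)},\qquad\tau<T_z,
\]
and thereby reduce the estimate to a small-ball/Cameron--Martin bound for the Brownian driver $U=-B$ under $\Prob$, coupled with a deterministic continuity estimate for the Loewner equation.

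First I would fix a target driver $\phi:[0,T^*]\to\R$ with $T^*\le y^2/(2a)+\delta/2$ whose deterministic Loewner flow absorbs $z$ by time $T^*$ along a curve contained in a thin tubular neighborhood of the L-shape $L_z$, and such that $\phi$ itself satisfies the driver bounds of $E_{z,\delta}$ with a $\delta/2$ margin. A natural choice consists of a ``phase one'' of duration $\sim\delta$ moving $\phi$ from $0$ to near $x$, followed by a ``phase two'' of duration $\sim y^2/(2a)$ at constant value $x$, the two phases glued smoothly. Then consider the tube event
\[
\widetilde E := \{|U_t-\phi(t)|\le\eta(t)\text{ for all }0\le t\le T^*+\delta/2\}
\]
with phase-dependent tolerance $\eta(t)$: of constant order $\delta/4$ during phase one and of Brownian scale $cy$ during phase two (for a small $c=c(\delta,\kappa)$), so as to match the natural BM fluctuation size $\sqrt{y^2/(2a)}$ in phase two.

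On $\widetilde E$, deterministic Loewner continuity (made uniform by passing to rescaled coordinates where the phase-two window has unit length, via scaling invariance of $SLE_\kappa$) yields $\tau:=\tau_{y/2}(z)<T^*$ with $S_\tau(z)\ge 1/2$, and the full event $E_{z,\delta}$ holds; consequently $M_\tau(z)=\Upsilon_\tau^{d-2}S_\tau^{4a-1}\ge c_1(\kappa)\,y^{d-2}$. On the BM side, a Cameron--Martin argument combined with Brownian small-ball estimates in each phase (together with a Gaussian transition factor at time $\delta$ of size $\sim y$, needed to funnel $U_\delta$ into the tight phase-two window around $x$) gives $\Prob(\widetilde E)\ge c_2\,G(z)\,y^{2-d}$ uniformly in $(x,y)\in[0,1]\times(0,1]$; this scaling is consistent with the tight upper bound $\Prob(\widetilde E)\le\Prob(\tau<\infty)\asymp G(z)y^{2-d}$ from Proposition \ref{junprop.1}. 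After truncating $\widetilde E$ at $\tau$ to obtain an $\F_\tau$-measurable subevent on which $E_{z,\delta}$ still holds, the Radon--Nikodym identity gives
\[
\Prob^*_z(E_{z,\delta})\ge\Prob^*_z(\widetilde E)=\frac{\E[M_\tau\mathbf{1}_{\widetilde E}]}{G(z)}\ge\frac{c_1\,y^{d-2}\cdot c_2\,G(z)\,y^{2-d}}{G(z)}=c_1c_2>0,
\]
yielding the lemma with $u=c_1c_2$.

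The main obstacles are twofold. First, the deterministic Loewner continuity must be uniform in $(x,y)\in[0,1]\times(0,1]$ near the terminal singularity at $T_z$, which is why the rescaling to unit phase-two window is essential: after dividing by $y$, the tip is at macroscopic distance and a Gronwall bound for the Loewner ODE gives a uniform modulus. Second, the BM tube probability must reach the correct scale $G(z)y^{2-d}\asymp S(z)^{4a-1}$, which relies on choosing the phase-dependent tolerance so that the Gaussian cost of steering $U_\delta$ into a window of size $\sim y$ around $x$ reproduces exactly the Green-function factor $S(z)^{4a-1}$. An alternative route, perhaps cleaner when $y\ll 1$, is to use Proposition \ref{rn} to compare two-sided radial through $z$ with two-sided chordal through $x$ during the approach to the real axis, and then handle the final vertical leg from near $x$ up to $z$ by scaling.
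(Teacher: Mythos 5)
Your high-level plan---pull the estimate back to the unweighted SLE law via the Radon--Nikodym derivative $M_\tau/G(z)$, then prove a Brownian tube estimate plus a deterministic Loewner-continuity step---is exactly the strategy the paper uses for the case $y\geq 10\rho$. The difficulty is that you present a single tube construction that you claim works \emph{uniformly} in $y\in(0,1]$, and this uniformity does not hold as stated. Concretely, your Brownian estimate has the form
\[
\Prob(\widetilde E)\;\asymp\;\underbrace{c_1(\delta)}_{\text{phase 1}}\cdot\underbrace{\tfrac{cy}{\delta}}_{\text{funnel}}\cdot\underbrace{c_2(a,c)}_{\text{phase 2}}\;\asymp\;y,
\]
whereas your change-of-measure requires $\Prob(\widetilde E)\gtrsim G(z)\,y^{2-d}\asymp S(z)^{4a-1}\asymp y^{4a-1}$ (for $x$ bounded away from $0$). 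These exponents agree only when $4a-1=1$, i.e.\ $\kappa=4$. For $\kappa>4$ the bound $y\geq c\,y^{4a-1}$ fails for small $y$, so the lower bound you need is not achieved. For $\kappa<4$ something worse happens: if the deterministic claim ``on $\widetilde E$ one has $\tau<T^*$ and $S_\tau\geq 1/2$'' were uniformly valid, then $\widetilde E\subset\{\tau_{y/2}(z)<\infty\}$ and hence $\Prob(\widetilde E)\leq C\,G(z)\,y^{2-d}\asymp y^{4a-1}\ll y$ by the one-point estimate (Proposition~\ref{junprop.1}), contradicting the $\asymp y$ scaling above. This forces the conclusion that your deterministic Loewner step is \emph{not} uniform in $y$: when $y\ll\delta$, the phase-one hull (whose diameter is of macroscopic size) can already bring $\Upsilon_t(z)$ below $y/2$ before the vertical leg begins, at a time when $S_t(z)$ has no lower bound; and near the absorption time the driver-to-hull map is not Lipschitz with a $y$-uniform constant, so a $cy$-wide driver tube does not deterministically guarantee $S_\tau\geq 1/2$.

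The paper resolves precisely this difficulty by splitting into two regimes. For $y$ bounded below (the regime where your argument is sound), it runs exactly your Cameron--Martin argument: a target driver through an $L$-shape, a tube of \emph{constant} (not $y$-dependent) width, and the geometric bound $S_\eta(z)\geq c_3$ to lower-bound $M_\eta$. For $y$ small, instead of working under $\Prob$ and changing measure, it works \emph{directly} under $\Prob^*_z$, using the identification of two-sided radial with a Bessel-type SDE: it compares $\Prob^*_z$ with two-sided chordal SLE to $x$ via Proposition~\ref{rn}, applies the Bessel estimate of Lemma~\ref{jun8.lemma} to control the driver on the approach, and uses the Beurling estimate to control $|Z_\eta(z)|$ before invoking a scaled version of the uniform bound (Lemma~\ref{anotherlemma}) for the final leg. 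This avoids any need to know the precise polynomial order of a tube probability under $\Prob$. Your closing remark about ``an alternative route \dots\ via Proposition~\ref{rn}'' is, in fact, the route the paper actually takes for small $y$; as written, your main argument needs that alternative, not just as a cleaner option, but to close the gap for $y\ll 1$ (and for all $\kappa\neq 4$).
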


We will use the next lemma in the proof
of Lemma~\ref{partytime}. The lemma may
seem to follow immediately from
$T_z < \infty$, but it is
important to establish uniformity in~$z$.
%
\begin{lemma} \label{anotherlemma}
For every $\epsilon\,{>}\, 0$, there exists $r \,{<}\, \infty$ such
that if \mbox{$z \,{=}\, x \,{+}\,iy\,{\in}\,\Half$},
%
\begin{equation} \label{jun85}
\Prob^*_z\{T_z \leq r|z|^2; |U_t| \leq r|z|, 0 \leq t
\leq
T_z \}
\geq1- \epsilon.
\end{equation}
\end{lemma}
\begin{pf}
By scaling and symmetry, it suffices to prove the result
for $|z| = 1$, $x \geq0$. For fixed $z$, the result is immediate
from the fact that $\Prob_z^*\{T_z < \infty\} = 1$. It is not
difficult to extend this as follows: for every $u > 0$ and
every $\epsilon> 0$, there exists $r$ such that if $|z| = 1$
and $S_0(z) \geq u$, then
%
\begin{equation} \label{jun87}
\Prob^*_z\{T_z \leq r; |U_t| \leq r, 0 \leq t \leq
T_z \}\geq1-\epsilon.
\end{equation}

For $y < 1/100$, let $\Prob_1^*$ denote probabilities for two-sided
chordal $\mathit{SLE}_\kappa$
to~$1$. Again, since $\Prob_1^*\{T_1 < \infty\}=1$, it
is easy to see that for every $r$, there exists $\epsilon> 0$
such that
%
\begin{equation} \label{jun86}
\Prob^*_1\{T_1 \leq r; |U_t| \leq r, 0 \leq t \leq
T_1 \} \geq1 - \epsilon.
\end{equation}
Suppose $z = x + iy \in\Half$ with $|z-1| = \delta/2 < 1/4$. Let
\[
\sigma= \inf\{t\dvtx|\gamma(t) - 1| = \delta\}
\]
and define $\xi$ by $\gamma(\sigma) = 1 + \delta e^{i\xi}$.
From (\ref{jun86})
and Proposition~\ref{rn}, we see that there exists $c_1$ such
that
\[
\Prob^*_z\{\sigma\leq r; |U_t| \leq r, 0 \leq t \leq
\sigma\} \geq1 - c_1\epsilon.
\]
Using Lemma~\ref{jun8lem4}, we see that there exists $u > 0$, such that
\[
\Prob^*_z\{\sigma\leq r; |U_t| \leq r, 0 \leq t \leq
\sigma; S_\sigma(z) \geq u \} \geq1 - 2c_1\epsilon.
\]
Using the form of the Poisson kernel in the upper half plane, we can
see that $|Z_\sigma(z)| \leq c_2 \delta$. By using (\ref{jun87}),
we see that there exists $\tilde r$ such that
\begin{eqnarray*}
&&\Prob^*_z\{\sigma\leq r; |U_t| \leq r, 0 \leq t \leq
\sigma; S_\sigma(z) \geq u;
T_z - \sigma\leq\tilde r \delta^2;
\\
&&\hspace*{125pt}|U_t - U_\sigma| \leq\tilde r \delta, \sigma\leq t \leq
T_z \} \\
&&\qquad\geq1 - 3c_1\epsilon.
\end{eqnarray*}
\upqed\end{pf}
\begin{pf*}{Proof of Lemma~\ref{partytime}}
By scaling and symmetry, we may assume that $z=x+iy$ with
$|z| = 1$ and $x \geq0, y > 0$.
We
will show that there exists\vadjust{\goodbreak} $c < \infty$ such that
for each $\rho$, there exists $q(\rho) > 0$ such
that for all $z$,
\[
\Prob_z^* \bigl[E_{z,c\sqrt\rho}\bigr] \geq q(\rho).
\]
If suffices to consider $0 < \rho\leq1/1000$.
We will consider two cases: $y \leq10\rho$ and $y > 10\rho$.
In this proof, constants $c_1,c_2,\ldots$ are independent of
$\rho$, but constants $\delta, q_1,q_2,\ldots$ may depend
on $\rho$.

First assume $y \leq10 \rho\leq1/100$, and hence
$3/4 < x \leq1$.
Let
\[
\eta= \eta_{\rho,z} = \inf\{t\dvtx\Re[\gamma(t)] = x - 4 \rho\}.
\]
For every $\delta> 0$, consider the event $V_\delta= V_{\delta,\rho,z}$
given by
\[
V_\delta= \{\eta\leq\delta; -\delta\leq U_t \leq1 + \delta,
0 \leq t \leq\eta\}.
\]
Using the deterministic estimate, Lemma~\ref{newdetermine},
we can see that by choosing $\delta$ sufficiently small, then on
the event $V_\delta$, $\Im[\gamma(\eta)] \leq\rho$. By choosing
$\delta$ smaller if necessary, we
assume $\delta< \rho$.

Using Lemma~\ref{jun8lemma}, we can see that $\Prob[V_\delta] \geq
q_1 > 0$. There is a curve
of length at most $11 \rho$ in $H_\eta$ connecting $z$ and $\gamma
(\eta)$.
Hence, using the Beurling estimate,
there exists $c_1$ such
that
\[
|Z_\eta(z)| \leq c_1 \sqrt\rho.
\]
[Actually, we can get an estimate of $O(\rho)$, but the estimate
above suffices for our purposes.]
Using Lemma~\ref{anotherlemma}, we can say there exists $c_2$ such that
\[
\Prob\Bigl\{ T - \eta\leq c_2 \sqrt\rho,
\sup_{\eta\leq t \leq T}
|U_t - U_\eta| \leq c_2 \sqrt\rho{\big|}
V_\delta\Bigr\} \geq\frac12.
\]
Therefore, with probability at least $q_1/2$,
\[
T \leq(c_2 + 1) \sqrt\rho,\qquad
-(1+c_2) \sqrt\rho\leq U_t \leq1 + (1 + c_2) \sqrt\rho.
\]

We now assume $y \geq10\rho$.
Let
\[
\eta= \eta_{\rho,z} =
\inf\{t\dvtx\Im[\gamma(t)] = y - 4 \rho\}.
\]

Let $W_t$ denote a standard Brownian motion and consider the event
$E = E_{\delta,x}$ that
\begin{eqnarray*}
|W_t - (tx/\delta)| &<& \delta,\qquad 0 \leq t \leq\delta,
\\
|W_t - x| &<& \delta,\qquad \delta\leq t \leq1/a.
\end{eqnarray*}
Using standard estimates for Brownian motion (including the
Cameron--Martin formula), it is standard to show that for every $\delta
> 0$
there exists $u_1 > 0$ such that for all $0 \leq x \leq1$,
$\Prob(E) \geq u_1$. If we let $U_t = W_t$, then by choosing $\delta$
sufficiently small, we see that
$\Prob[E] \geq u_1. $

We claim that there exists $c_3 > 0$ such that
on the event $E$,
\[
S_\eta(z) \geq c_3.
\]
To show this, we consider the path $\gamma[0,\eta]$. Let
$\gamma^+$ be the part of the path mapped to $[U_\eta,\infty)$
under\vadjust{\goodbreak} $g_\eta$ and let $\gamma^-$ be the part mapped to $(-\infty
,U_\eta)$.
Using the fact that $\gamma[0,\eta] \subset L_\rho$; $\Im[\gamma
(\eta)] = y-4 \rho$
and $\Im[\gamma(t)| < y - 4 \rho, t < \eta$, we can see geometrically
that there is a positive probability $u_2$ such that a Brownian motion
starting at $z$ exists $H_\eta$ at $\gamma^+$ with probability at
least $c_2'$ and at $\gamma^-$ with probability at least $c_2'$.
This combined with
(\ref{jun91}) gives the lower bound on $S_\eta(z)$.
Since $\Upsilon_t(z)$ decreases with $t$, we get a lower bound
on $M_\eta(z)$. Therefore, there exists $q_2 > 0$ such that
\[
\Prob_z^*
\{\gamma[0,\eta] \subset L_{z,\rho}\}
\geq q_2.
\]
[The reader may note that we have used the trivial bound
$\Upsilon_\eta(z) \leq1$. In fact, $\Upsilon_\eta(z) \asymp
\rho$, so we can improve the last estimate but we do not need to.
For $\rho$ small, it is much
more likely for two-sided radial $\mathit{SLE}$ to follow
the $L$-shape to $z$ then for usual $\mathit{SLE}$.]

On the event $E$, there is a curve connecting $\gamma(\eta)$ to $z$
in $H_\eta$ of length $O(\rho)$. Using the Beurling estimate,
we can see that there exists $c_4$ such that $|Z_\eta(z)|
\leq c_4 \sqrt\rho$. The proof proceeds as in the previous
case.
\end{pf*}

\subsection{Remainder of proof} \label{mainestimate}

To finish the proof, we need to consider $z,w$ that are close.
In this case, we will take a stopping time $\sigma$ such that
$z,w$ are not so close in the domain $H_\sigma$. The next lemma
is easy, but it is useful to state it.
%
\begin{lemma} \label{jun1lemma1}
There exists $c > 0$ such that the following is true.
Suppose $z$, $w \in\Half, u \geq0$ and $\sigma$ is a stopping time
for $\mathit{SLE}_\kappa$ such that
\[
|\gamma(t) - z| \geq3 |z-w|,\qquad
0 \leq t \leq\sigma,
\]
and such that
%
\begin{equation} \label{jun92}
\Prob_z^*\{
F_{H_\sigma}(z,w;\gamma(\sigma),\infty)
\geq u\} \geq u.
\end{equation}
Then
\[
F(z,w) \geq
c u^3.
\]
\end{lemma}
\begin{remark*} Implicit in the assumptions is $|z| \geq3|z-w|$.
We do not assume that the disk of radius $|z-w|$ about $z$ is contained
in $\Half$. In particular, $\Im(z),\Im(w)$ can be small and
very different.
\end{remark*}
\begin{pf*}{Proof of Lemma~\ref{jun1lemma1}}
Let $r = |z-w|$ and let $\ball$ denote the open disk of radius
$2r$ centered at $z$. By assumption $0 \notin\ball$. It
is not necessarily the case that $\ball\subset\Half$; however,
for $t \leq\sigma$, the conformal map $g_t$ can be extended
to $\ball$ by Schwarz reflection.

Let $E = E_u$ denote the event that $F_{H_\sigma}(z,w;\gamma(\sigma
),\infty)
\geq u$.
Using the Beurling estimate, we can see that $M_{t \wedge\sigma}(z),
M_{t \wedge\sigma}(w)$ are bounded martingales and hence
\[
G(z) = M_0(z) = \E[M_\sigma(z)],\qquad
G(w) = M_0(w) = \E[M_\sigma(w)].
\]
Also, by definition of $ \E^*_z$,
\begin{eqnarray*}
\E^*_z[M_\sigma(z) 1_E]
&=& \frac{\E[M_\sigma(z)^2 1_E]}{M_0(z)}
= \frac{\E[M_\sigma(z)^2 1_E]}{ \E[M_\sigma
(z) ]
} \geq u \frac{\E[M_\sigma(z)^2 1_E]}{ \E
[M_\sigma(z)
1_E ]
}
\\
&\geq& u \E[M_\sigma(z) 1_E ]
\geq u^2 G(z).
\end{eqnarray*}
The first inequality uses (\ref{jun92}).
Using the distortion theorem on $\ball$, we can see that
$|g_\sigma'(z)| \asymp|g_\sigma'(w)|$.
We also claim that
%
\begin{equation} \label{jun93}
\frac{S_\sigma(z)}{S_\sigma(w)} \asymp
\frac{\Im(z)}{\Im(w)}.
\end{equation}
To see this, consider the first time that a Brownian motion
starting at $z,w$ reaches $\R\cup\p\ball$. If $p(z),p(w)$
denotes the probabilities that the process hits $\p\ball
\cap\Half$ before leaving $\Half$, then standard estimates
(gambler's ruin estimate) show that $p(z)/p(w) \asymp
\Im(z)/\Im(w)$. Also, the conditional distributions given
that one hits $\p\ball$ are mutually absolutely
continuous (here we use either a boundary Harnack principle
or the explicit form of the Poisson kernel in a half disk).
Given this and~(\ref{jun91}), we can conclude (\ref{jun93}).

Therefore, using
(\ref{greenscale}), we see that
\[
\frac{M_\sigma(z)}{M_\sigma(w)}
= \frac{G_{H_\sigma}(z;\gamma(\sigma),\infty)}
{G_{H_\sigma}(w;\gamma(\sigma),\infty)} \asymp
\frac{G(z)}{G(w)}
\]
and hence,
\[
\E^*_z[M_\sigma(w) 1_E]
\geq c u^2 G(w).
\]
Also,
\[
\E_z^*[G_{H_{T_z}}(w;z,\infty) 1_E |\F_\sigma]
= 1_E u G_{H_\sigma}(w;\gamma(\sigma),\infty)
= 1_E u M_\sigma(w).
\]
Taking expectations, we get
\[
\E_z^*[G_{H_{T_z}}(w;z,\infty)]
\geq\E_z^*[G_{H_{T_z}}(w;z,\infty) 1_E]
\geq c u^3 G(w),
\]
which implies $F(z,w) \geq c u^3$.
\end{pf*}
\begin{pf*}{Proof of (\ref{may274})}
By symmetry and scaling, it suffices to consider
$1 = |z| \leq|w|$ with $\Re(z) \geq0$. If $|w| >
1.01$, then $w \notin L_{z,1/100}$ and hence Corollary~\ref{may28cor1}
implies that $F(z,w) \geq c >0$.

Similarly, if $\Im(z) > \Im(w) + (1/100)$, then
$z \notin L_{w,1/100}$, and Corollary~\ref{may28cor1}
implies that $F(w,z) \geq c$.
Using similar facts about real parts
and interchanging~$z,w$, it suffices to consider $z=x+iy,
w = \tilde x + i \tilde y$
with $1 \leq|z|,|w| \leq1.01$,
$x \geq0$ and
\[
y \leq\tilde y \leq y + \frac1{100}
,\qquad |x - \tilde x | \leq\frac1{100}.
\]
We now split into two cases: $\tilde y \geq1/10$ and
$\tilde y < 1/10$.\vadjust{\goodbreak}

For $\tilde y \geq1/10$, let $\tau= \inf\{t\dvtx\Upsilon_t(z)
= 10|z-w|\}, T = T_z$.
Using Lemma~\ref{lemmamay22}, we see that there exists $u > 0$ such that
\[
\Prob_z^*\{S_\tau(z) \geq1/4\} \geq u.
\]
Using distortion estimates and Corollary~\ref{may28cor1}, we can see that
there exists $c > 0$ such that on the event
that $S_\tau(z) \geq1/4$,
\[
F_{H_\tau}(z,w;\gamma(\tau), \infty) \geq c.
\]
We can now apply Lemma~\ref{jun1lemma1}.

If $\tilde y < 1/10$ and $|z-w| \leq\tilde y/20$, we can do similarly
as above, interchanging the roles of $z$ and $w$.

For the remainder, we assume that
$\tilde y < 1/10$ and $|z-w| \geq\tilde y/20$. Note that
$x,\tilde x > 9/10$. Let
\[
r = \max\{\tilde y, |z-w| \} < 1/10.
\]
Let $\tau= \inf\{t\dvtx|\gamma(t) - \tilde x| = 4r\}$.
If we write
\[
\gamma(\tau) = \tilde x + 4r e^{i\xi},
\]
then by Lemma~\ref{jun8lem4}
there exists $u$ such that
\[
\Prob_z^*\{ \xi> u \} \geq u.
\]
On this event, distortion estimates
and Corollary~\ref{may28cor1} imply that
$F_{H_\tau}(z,w;\break\gamma(\tau),\infty) \geq c$ for
some $c$ (depending on $\xi$). We can now apply Lemma~\ref{jun1lemma1}.~%
\end{pf*}

\section*{Acknowledgment}

The authors would like to thank a referee for his/her valuable
comments, which improve the exposition of this work.



%
\printaddresses

\end{document}